\newtheorem{theorem}{Theorem}[section]
\newtheorem{definition}{Definition}[section]
\newtheorem{lemma}[theorem]{Lemma}
\newtheorem{remarks}{Remark}[section]
\begin{document}
%
\title{Adaptive Kernel Methods}
%
%
%

\author{Tam\'as~D\'ozsa,~\IEEEmembership{Member,~IEEE},
        Andrea~Angino, Zolt\'an Szab\'o,~\IEEEmembership{Member,~IEEE,}        and~J\'ozsef Bokor,~\IEEEmembership{Fellow,~IEEE},
        Matthias~Voigt
\thanks{T. D\'ozsa and A. Angino contributed equally to this work and should both be considered first authors.}
\thanks{T. D\'ozsa, A. Angino and M. Voigt are with the Faculty 
of Mathematics and Computer Science, UniDistance Suisse, Brig VS, Switzerland \ (e-mail: tamas.dozsa@unidistance.ch, andrea.angino@unidistance.ch, matthias.voigt@fernuni.ch)}
\thanks{T. D\'ozsa, Z. Szab\'o and J. Bokor are with HUN-REN Institute for Computer Science and Control (dozsa.tamas@sztaki.hu, szabo.zoltan@sztaki.hu, bokor.jozsef@sztaki.hu).}
\thanks{Manuscript received XXX.}}

\markboth{IEEE Transactions on Neural Networks and Learning Systems}%
{A. Angino \MakeLowercase{\textit{et al.}}: Adaptive kernel methods}

\maketitle

\begin{abstract}
Kernel methods approximate nonlinear maps in a data-driven manner by projecting the target map onto a finite-dimensional Hilbert space called the solution space. Traditionally, this space is a subspace of a fixed ambient reproducing kernel Hilbert space (RKHS), determined solely by the chosen kernel and the dataset, whose elements identify the basis elements. Consequently, the projection operator underlying the kernel method depends on the loss function, the dataset, and the choice of ambient RKHS. In this study, we consider kernel methods whose solution spaces also depend on learnable parameters that are independent of the dataset. The resulting methods can be viewed as variable projection operators that depend on the loss function, the dataset, and the new learnable parameters instead of a fixed RKHS. This work has two main contributions. First, we propose an efficient approximation of kernels associated with infinite-dimensional RKHSs, commonly used to reduce the solution-space dimension for large datasets. Second, we construct fixed-dimensional, parameter-dependent solution spaces that enable highly efficient kernel models suitable for large-scale problems without the need to approximate kernels of infinite-dimensional RKHSs. Our novel family of adaptive kernel methods generalizes earlier approaches, including Random Fourier Features, and we demonstrate their effectiveness through several numerical experiments.
\end{abstract}

\begin{IEEEkeywords}
reproducing kernel Hilbert spaces, orthogonal expansions, kernel approximation, adaptive projections
\end{IEEEkeywords}

%
\IEEEpeerreviewmaketitle

\section{Introduction}
%
%
%
%
\IEEEPARstart{A}{pproximation} algorithms in reproducing kernel Hilbert spaces (RKHS)~\cite{aronszajn1950theory} using weighted sums of kernel functions are referred to as kernel methods~\cite{kernelMethodsReview} and we refer to approximants characterized by this structure as kernel models. Kernel methods have been successfully used to develop supervised~\cite{chapelle2007training, GaussianProcessReview} and unsupervised machine learning methods~\cite{KernelClustering}. Kernel models satisfy powerful asymptotic convergence guarantees as a consequence of the beneficial properties of RKHSs (see,  e.g.,~\cite{aronszajn1950theory} and section~\ref{sec:probstate}). The usefulness of kernel methods is well-reflected by the number of state-of-the-art methods for control~\cite{hewing2020cautious}, signal and image processing~\cite{kernelMethodsReview}, model reduction and system identification \cite{Wir13,BouH17,BadHMB22}, and other applications~\cite{BERNALDELAZARO2015140, vpsvm} relying on them.

To aid the introduction of our results, we first describe the class of considered learning problems. Denote the space of functions $\C^N \supseteq \cX \to \cY \subseteq \C$ with $N \in \N$ by $\cF$. Let $\cH \subset \cF$ be an RKHS. Kernel methods seek to obtain an approximation to the map-to-be-learned $F \in \cF$ given a dataset $\{ (t_k, F(t_k)) \}_{k=1}^{q}$ with $q \in \N$ in $\cH$. Specifically, kernel methods construct an approximant of $F$ in the form of a kernel model
\begin{equation}
\label{eq:kernelModel}
    f = \sum_{k=1}^q c_k \xi(\cdot, t_k) \in \cH,
\end{equation}
where $c_k \in \C$, $t_k \in \cX \ (k=1,\ldots,q)$ and $\xi : \cX \times \cX \to \C$ is the reproducing kernel associated with $\cH$. Importantly, for a learning problem to be well-defined, we assume that an error (loss) functional $E : \left( \cX \times \cY \times \cY \right)^q \to [0, \infty)$ is given, and the linear parameters $c_k \ (k=1,\ldots,q)$ in Eq.~\eqref{eq:kernelModel} are chosen so that $E((t_1, f(t_1), F(t_1)), \ldots, (t_q, f(t_q), F(t_q)))$ is minimal. The representer theorem~\cite{representer} guarantees the existence of such an optimal kernel model under mild conditions.

Notice that by Eq.~\eqref{eq:kernelModel}, $f$ belongs to an at most $q$-dimensional subspace of $\cH$ spanned by the  functions $\{ \xi(\cdot, t_k) \}_{k=1}^{q}$. In this way, kernel methods can be interpreted as projection operators of the form 
$$
    \cP_{S, E} : \cF \to \cH \quad (S \in \cS, E \in \cE),
$$
where $\cE$ denotes the space of admissible loss functions (as per the representer theorem~\cite{representer}) and $\cS$ denotes the set of all possible datasets $\{ (t_k, F(t_k)) \}_{k=1}^{q} \subset \cX \times \cY$ generated by $F$. Thus, a kernel method can be interpreted as finding a function $f \in \cH$ (or equivalently, coefficients $c_k \in \C \ (k=1,\ldots,q)$) such that
\begin{align}
    \label{eq:proj}
    \begin{split}
    f &:= \cP_{S, E}F \\
      &:= \argmin_{g \in \cH} E((t_1, g(t_1), F(t_1)), \ldots, (t_q, g(t_q), F(t_q))) \\
      &= \sum_{k=1}^q c_k \xi(\cdot, t_k) \in \cH.
    \end{split}
\end{align}

Eq.~\eqref{eq:proj} shows that kernel models are determined by the choice of the loss function $E$, the dataset $S := \{ (t_k, F(t_k) )\}_{k=1}^q$ and the choice of ambient RKHS $\cH$. Indeed, the choice of $\cH$ determines the structure of the basis functions $\xi(\cdot, t_k) \ (k=1,\ldots,q)$ in Eq.~\eqref{eq:kernelModel}. We will refer to the subspace $\cG:= \myspan\{ \xi(\cdot, t_k) : k=1,\ldots,q \} \subset \cH$ as the solution space henceforth. Notice that the at most $q$-dimensional solution space $\cG$ is completely determined by the kernel $\xi(\cdot,\cdot)$ and the dataset. Importantly, the kernel $\xi(\cdot,\cdot)$ and the RKHS $\cH$ are in a one-to-one relationship (see section~\ref{sec:probstate}).

In this study, we generalize the kernel models and kernel methods given in Eqs.~\eqref{eq:kernelModel} and~\eqref{eq:proj}. We refer to our construction as adaptive kernel methods and showcase their effectiveness for several problems. The key idea behind our generalization is to replace the fixed ambient RKHS $\cH$ in Eq.~\eqref{eq:proj} by a set of parameter-dependent RKHSs whose parameters are subject to learning. Formally, consider a parameter domain $\cL$. The proposed adaptive kernel methods and kernel models can then be described with \textit{variable projection operators}~\cite{varpro2} of the form
$$
\cP^{\Lambda}_{S, E}: \cF \to \cH(\Lambda) \quad (\Lambda \in \cL, S \in \cS, E \in \cE),
$$
where the ambient RKHS $\cH(\Lambda)$ is assumed to be well-defined for all $\Lambda \in \cL$ and
\begin{align}
    \label{eq:varproj}
    \begin{split}
    f^{\Lambda} &:= \cP^{\Lambda}_{S, E} F \\
    &:= \argmin_{g \in \cH(\Lambda)} E((t_1, g(t_1), F(t_1)), \ldots, (t_q, g(t_q), F(t_q))) \\
    &= \sum_{k=1}^q c_k \xi^{\Lambda}(\cdot, t_k) \in \cH(\Lambda),
    \end{split}
\end{align}
where $\xi^{\Lambda} \ (\Lambda \in \cL)$ denotes the kernel of the RKHS $\cH(\Lambda)$. Our main objective consists of computing optimal parameters $\Lambda^* \in \cL$ and a corresponding approximant $f^{\Lambda^*}$ function such that
\begin{align}
\label{eq:optimProb}
\begin{split}
  f^{\Lambda^*} &:= \cP^{\Lambda^*}_{S, E} F, \quad \text{where} \\
     \Lambda^* &:= \argmin_{\Lambda \in \cL} \\ 
     & \quad E((t_1, \cP^{\Lambda}_{S, E} F(t_1), F(t_1)), \ldots, (t_q, \cP^{\Lambda}_{S, E} F(t_q), F(t_q))). 
     \end{split}
\end{align}

Therefore, the model can be obtained by a joint optimization over the projection parameter and the weight vector (e.g., using a gradient-based method).

The existence of such a kernel model is discussed in detail in section~\ref{sec:adaptker}. The benefit of the adaptive kernel method given by $\cP^{\Lambda}_{S, E}$, compared to Eq.~\eqref{eq:proj}, is that the solution space $\cG(\Lambda) := \myspan\{ \xi^{\Lambda}(\cdot, t_k)\,:\,k=1,\ldots,q \} \ (\Lambda \in \cL)$ can be adjusted for a more accurate approximation of $F$. The parameterization $\Lambda \mapsto \cH(\Lambda)$ is chosen based on a priori information about $F$ and a correct choice can result in physically interpretable parameters (see, e.g., subsection~\ref{subsec:lti}).

One well-known limitation of traditional kernel methods is that training kernel models becomes computationally expensive, if the map-to-be-approximated ($F \in \cF$) is highly nonlinear~\cite{rahimi, kilic2025interpretable}. In addition, the inference time of such models can also increase significantly~\cite{chapelle2007training, kilic2025interpretable}. These problems manifest if the ambient RKHS $\cH$ is infinite-dimensional~\cite{rahimi}. Because of this, several previous methods attempt to overcome this problem by \textit{approximating the kernel} $\xi(\cdot,\cdot)$ of the RKHS $\cH$ using expansions with a fixed finite set of basis functions (see, e.g.,~\cite{rahimi, CVM, kilic2025interpretable, NEURIPS2018_4e5046fc} and section~\ref{sec:probstate}). One popular approach using this idea is the Random Fourier Features (RFF) method introduced in~\cite{rahimi} by Rahimi and Recht. They leverage Bochner’s theorem, which characterizes the Fourier transform of certain shift-invariant kernels~\cite{rudin2017fourier} to obtain approximations of a large class of kernels. These results have been extended to non-shift-invariant kernels in~\cite{Krein}. Similar constructions are proposed in~\cite{CVM} and~\cite{NEURIPS2018_4e5046fc}. Recently, in~\cite{kilic2025interpretable}, Kilic and Batselier propose a fully stochastic Bayesian tensor network kernel machine model. They utilize tensor product-based kernels to provide solutions for kernel regression problems. Similar results have recently been applied for federated learning tasks~\cite{Fed}.
The adaptive kernel methods proposed in this paper can also be used with large datasets. In particular, if we assume that the reproducing kernels $\xi^{\Lambda}(\cdot,\cdot)$ in Eq.~\eqref{eq:varproj} induce $D$-dimensional RKHSs $\cH(\Lambda) \ (\Lambda \in \cL)$ with $D \ll q$, then the adaptive methods can be trained in a reasonable time, similarly to~\cite{rahimi, CVM, kilic2025interpretable, NEURIPS2018_4e5046fc}. We propose two different methods to obtain such adaptive kernel models. First, following the ideas in~\cite{rahimi}, we attempt to find an appropriate kernel $\xi^{\Lambda}(\cdot,\cdot)$ of a finite-dimensional RKHS $\cH(\Lambda)$ by approximating the kernel of an infinite-dimensional RKHS. We provide a general error formula for this approximation that can be used for any parametrization $\Lambda \mapsto \cH(\Lambda)$. In addition, we give a concrete example of the application of our formula using the Hardy-Hilbert space $H_2(\mathbb{D})$, which is an infinite-dimensional RKHS. Second, we fix the parametrization $\Lambda \mapsto \cH(\Lambda)$ such that for any $\Lambda \in \cL$, we obtain a $D$-dimensional RKHS with $D \ll q$. Then, we solve Eq.~\eqref{eq:optimProb} directly using nonlinear optimization schemes. We verify that the celebrated RFF method~\cite{rahimi} arises as a special case of the proposed algorithm. We illustrate the benefit of using adaptive kernel methods through two numerical experiments. In the first case, we consider the identification of a linear dynamical system and show that choosing the parametrization $\Lambda \mapsto \cH(\Lambda)$ correctly allows for the perfect reconstruction of $F$ with a low-complexity model as well as interpretable learned parameters $\Lambda$. In the second example, we consider a large benchmark machine learning dataset and show that the flexibility provided by adaptive kernel methods has several benefits. Indeed, in our experiments, the proposed methods significantly improve model performance compared to RFF, while maintaining comparable training costs. If the parametrization is chosen correctly, the proposed kernel models learn $F$ in much fewer iterations and at reduced training time.

We summarize the most important novelties of our study:
\begin{enumerate}
    \item We propose adaptive kernel models of the form discussed in Eq.~\eqref{eq:varproj} to solve approximation problems (see sections~\ref{subsec:kernLinAdapt} and~\ref{sec:adaptker}).
    \item We discuss how to obtain adaptive kernels $\xi^{\Lambda}(\cdot,\cdot)$ inducing finite-dimensional RKHSs that approximate the kernel of an infinite-dimensional RKHS well. We provide a generalized error formula and an example in $H_2(\D)$ (see section~\ref{subsec:kernLinAdapt}). 
    \item We propose solving Eq.~\eqref{eq:optimProb} with nonlinear optimization methods.
    We prove the existence of these solutions under weak assumptions (see sections under~\ref{subsec:orthogonal} and~\ref{subsec:GVP}).    
    \item We show that previous kernel approximation techniques, specifically the RFF method~\cite{rahimi}, are a special case of the proposed adaptive kernel models (see section~\ref{subsec:prevMeth}).  
    \item We provide a diverse array of numerical experiments to demonstrate the effectiveness of the proposed adaptive kernel methods (see section~\ref{sec:exp}). All the disclosed results are fully reproducible, see the code and data availability statement.
\end{enumerate}

The rest of this paper is organized as follows. In section~\ref{sec:probstate}, we review some useful properties of reproducing kernel Hilbert spaces and kernel-based approximations. Section~\ref{sec:linker} introduces the class of parameterized kernels inducing finite-dimensional RKHSs and focuses on how these can be used to approximate kernels of infinite-dimensional RKHS. In section~\ref{sec:adaptker}, we discuss how to construct adaptive kernel models in finite-dimensional Hilbert spaces and propose methods to solve Eq.~\eqref{eq:optimProb}. Section~\ref{sec:exp} contains our examples and numerical experiments. Finally, in section~\ref{sec:conc} we draw our conclusions and discuss future research directions.

\section{Properties of kernel methods}
\label{sec:probstate}
In this section, we review some important results from RKHS theory. These results motivate the use of kernel methods for approximating nonlinear maps in a data-driven manner. Then, we address some difficulties associated with using kernel methods to learn large datasets.

We first recall the definition of an RKHS.
\begin{definition}[Reproducing kernel Hilbert space]
\label{def:rkhs}
Let $\cX \subseteq \C^N$ be a non-empty set. Consider the Hilbert space of functions $\cX \to \C$ endowed with the inner product ${\langle \cdot, \cdot \rangle}_{\cH}$. If the linear evaluation functional
$$
   \cF_t:\cH \to \C \quad \text{with} \quad \cF_tf := f(t)
$$
is bounded (and hence continuous) for all $t \in \cX$, then $\cH$ is called a \emph{reproducing kernel Hilbert space}.
\end{definition}
Riesz's representation theorem (see, e.g.,~\cite{hartig1983riesz}) states that for any bounded linear operator $L$ on a Hilbert space, there exists a unique $h_L \in \cH$ such that $Lf = {\langle f, h_L \rangle}_{\cH}$. Using this and Definition~\ref{def:rkhs}, it follows immediately that in the RKHS $
\cH$ we have
\begin{equation}
    \label{eq:kernelSlice}
    \cF_t f = {\langle f, \xi_t \rangle}_{\cH}
\end{equation}
for some unique $\xi_t \in \cH$. This leads to the definition of reproducing kernels (see, e.g.,~\cite{aronszajn1950theory}).
\begin{definition}(Reproducing kernel).
\label{def:repKer}
    Let $\cH$ be an RKHS of functions $\cX \to \cY$. The map
    $$
    \xi : \cX \times \cX \to \C \quad \text{with} \quad \xi(x, t) := \xi_t(x),
    $$
    where $\xi_t$ is defined according to Eq.~\eqref{eq:kernelSlice},
    is called the \emph{reproducing kernel} associated with $\cH$.
\end{definition}
Definition~\ref{def:repKer} implies the so-called reproducing property of the kernel
$
    \xi(x, t) = {\langle \xi_x, \xi_t \rangle}_{\cH}
$
for all $t,x \in \cX$ which also explains the naming convention of these spaces. We note that we shall use the term kernel slices to term the functions $\xi_t(\cdot)$ henceforth. By Riesz's representation theorem, each RKHS has a unique reproducing kernel associated with it. Conversely, Aronszajn's theorem (see, e.g.,~\cite{aronszajn1950theory}) guarantees that any Hermitian positive definite bilinear form uniquely identifies an RKHS. 
Furthermore, this kernel has the following properties:
\begin{theorem}[Properties of reproducing kernels]
    \label{thm:kerProps}
    Let $\cH$ be an RKHS and $\xi(\cdot,\cdot)$ be its reproducing kernel defined according to Definition~\ref{def:repKer}. Then $\xi(\cdot,\cdot)$ has the following properties:
    \begin{enumerate}[label = \alph*)]
        \item (Hermitian symmetry) For any $t,x \in \cX$, 
        $\xi(x, t) = \overline{\xi(t, x)}$.
        \item (Positive definiteness) For any $\{ x_k \}_{k=1}^{q} \subset \cX $ and $\{ c_k \}_{k=1}^{q} \subset \C \ (q \in \N)$, it holds that
        $$
            \sum_{k,{k'}=1}^{q} c_k \overline{c_{k'}} \xi(x_k, x_{k'}) \ge 0,
        $$
        with equality, if and only if $c_1=\ldots=c_q = 0$.
        \item (Reproducing property) For any $f \in \cH$ and $t \in \cX$, 
        $$
        f(t) = {\langle f, \xi_t \rangle}_{\cH} = {\langle f, \xi(\cdot, t) \rangle}_{\cH}.
        $$
    \end{enumerate}
\end{theorem}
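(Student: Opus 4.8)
The plan is to derive all three properties from a single source: the Riesz representation in Eq.~\eqref{eq:kernelSlice}, equivalently the reproducing property $\xi(x,t) = {\langle \xi_x, \xi_t \rangle}_{\cH}$ already recorded after Definition~\ref{def:repKer}, together with the axioms of the complex inner product on $\cH$ (sesquilinearity and conjugate symmetry). The crucial structural observation is that every kernel slice $\xi_t = \xi(\cdot,t)$ is itself an element of $\cH$, so the reproducing identity may be applied with the test function $f$ taken to be another slice. I would adopt the convention that ${\langle \cdot,\cdot\rangle}_{\cH}$ is linear in its first argument and conjugate-linear in the second, which is the one consistent with both Riesz's theorem ($Lf = {\langle f,h_L\rangle}_{\cH}$) and the form of the quadratic sum in (b).

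I would prove (c) first, since it is essentially a rewriting of the setup: by Definition~\ref{def:repKer} the function $\xi_t(\cdot)$ is exactly $\xi(\cdot,t)$, and combining $\cF_t f = f(t)$ with Eq.~\eqref{eq:kernelSlice} gives $f(t) = {\langle f, \xi_t \rangle}_{\cH} = {\langle f, \xi(\cdot,t)\rangle}_{\cH}$ for all $f\in\cH$. Property (a) then follows by specializing (c) to slices. Applying the reproducing property to $f=\xi_t$ and evaluating at $x$ yields $\xi(x,t) = \xi_t(x) = {\langle \xi_t, \xi_x\rangle}_{\cH}$, and likewise $\xi(t,x) = {\langle \xi_x, \xi_t\rangle}_{\cH}$. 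Conjugate symmetry of the inner product gives ${\langle \xi_t, \xi_x\rangle}_{\cH} = \overline{{\langle \xi_x, \xi_t\rangle}_{\cH}}$, hence $\xi(x,t) = \overline{\xi(t,x)}$.

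For the inequality in (b), I would express the double sum as a single squared norm. Using (c) in the form $\xi(x_k,x_{k'}) = {\langle \xi_{x_k},\xi_{x_{k'}}\rangle}_{\cH}$ and sesquilinearity,
\[
\sum_{k,k'=1}^{q} c_k \overline{c_{k'}}\, \xi(x_k,x_{k'})
= \left\langle \sum_{k=1}^{q} c_k \xi_{x_k},\, \sum_{k'=1}^{q} c_{k'} \xi_{x_{k'}} \right\rangle_{\!\!\cH}
= \left\| \sum_{k=1}^{q} c_k \xi_{x_k} \right\|_{\cH}^{2} \ge 0,
\]
which settles the nonnegativity immediately.

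The main obstacle is the equality clause of (b). The squared-norm representation shows that equality holds precisely when $\sum_{k=1}^{q} c_k \xi_{x_k} = 0$ in $\cH$, and deducing from this that every $c_k$ vanishes requires the slices $\{\xi_{x_k}\}_{k=1}^{q}$ to be linearly independent. This is not automatic for an arbitrary RKHS, which is the delicate point I expect to need care. If the points $x_k$ are not pairwise distinct, or if $\cH$ is finite-dimensional and $q$ exceeds its dimension (the regime of interest in this paper, where $D \ll q$), the slices are necessarily dependent and the form can vanish with nonzero coefficients. I would therefore either restrict the claim to the positive \emph{semi}-definite inequality, or add the hypothesis that the $x_k$ are distinct and the slices linearly independent (equivalently, that $\xi$ is strictly positive definite), under which $\bigl\| \sum_{k} c_k \xi_{x_k} \bigr\|_{\cH} = 0$ does force $c_1 = \cdots = c_q = 0$.
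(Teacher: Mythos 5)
Your argument is correct, and it is worth noting that the paper itself offers no proof of this theorem: it defers entirely to the cited references (Aronszajn; Paulsen--Raghupathi), and what you wrote is precisely the canonical argument found there. Part (c) is indeed just Riesz's theorem combined with Definition~\ref{def:repKer}; (a) follows by testing the reproducing identity on slices and using conjugate symmetry; and the quadratic form in (b) collapses to a squared norm. One cosmetic remark: in (a) you derive $\xi(x,t) = {\langle \xi_t, \xi_x \rangle}_{\cH}$, while in (b) you use $\xi(x_k,x_{k'}) = {\langle \xi_{x_k}, \xi_{x_{k'}} \rangle}_{\cH}$, which is the opposite argument order (and the one the paper records after Definition~\ref{def:repKer}). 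This is harmless here because Hermitian symmetry makes the quadratic form real, so either identification yields $\bigl\| \sum_k c_k \xi_{x_k} \bigr\|_{\cH}^2$ up to conjugating the coefficients, but you should fix one convention and use it consistently.

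Your closing observation about the equality clause in (b) is not merely a point of caution --- it identifies a genuine overclaim in the statement as printed. The squared-norm identity shows equality holds iff $\sum_{k=1}^{q} c_k \xi_{x_k} = 0$ in $\cH$, and concluding $c_1 = \cdots = c_q = 0$ requires the slices $\{\xi_{x_k}\}_{k=1}^q$ to be linearly independent. This fails for repeated points ($x_1 = x_2$, $c_1 = -c_2 \neq 0$), and, more tellingly, it fails in the paper's own central setting: for the $D$-dimensional spaces $\cH(\Lambda)$ with truncated kernels as in Eq.~\eqref{eq:truncKer}, any $q > D$ slices are necessarily dependent, so the form vanishes for nonzero coefficients even at distinct points. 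What the cited references actually prove is positive \emph{semi}-definiteness; the strict clause holds only under the additional hypothesis that the kernel is strictly positive definite and the points pairwise distinct. Your two proposed remedies --- weakening the clause to semi-definiteness, or adding the independence hypothesis --- are exactly the right correction, and the theorem as used elsewhere in the paper (e.g., in the proof of Theorem~\ref{thm:kernelErr}) only ever needs the semi-definite version and parts (a), (c), so nothing downstream breaks.
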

For a proof of Theorem~\ref{thm:kerProps} as well as a general and deep discussion on RKHS theory, we recommend~\cite{aronszajn1950theory} and~\cite{paulsen2016introduction}. 
The next few theorems provide motivation for using kernel models to approximate functions. In the following, $\clos(\cdot)$ denotes the closure of a set (w.r.t. the metric induced by the inner product ${\langle \cdot,\cdot \rangle}_{\cH}$)
\begin{theorem}[Density of kernel slices]
    \label{thm:densKer}
    Let $\cH$ be an RKHS with the  kernel $\xi(\cdot,\cdot)$. Then $$G:=\clos\left({\myspan} \left\{ \xi(\cdot, t) \, : \, t \in \cX \right\} \right)  = \cH.$$
\end{theorem}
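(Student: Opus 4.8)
The plan is to exploit the Hilbert space structure of $\cH$ together with the reproducing property recorded in Theorem~\ref{thm:kerProps}. Write $G := \clos(\myspan\{\xi(\cdot,t) : t \in \cX\})$. Since $G$ is by construction a closed linear subspace of the Hilbert space $\cH$, the inclusion $G \subseteq \cH$ is immediate, and it is a standard fact that a closed subspace of a Hilbert space coincides with the whole space if and only if its orthogonal complement is trivial. I would therefore reduce the claim to showing $G^{\perp} = \{0\}$.

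To that end I would take an arbitrary $f \in G^{\perp}$ and argue that $f$ must be the zero function. The key observation is that orthogonality to the closed span $G$ is equivalent to orthogonality to each generator $\xi(\cdot,t)$: by continuity of the inner product, a functional vanishing on the dense span $\myspan\{\xi(\cdot,t) : t \in \cX\}$ vanishes on all of $G$, and conversely $f \in G^{\perp}$ forces ${\langle f, \xi(\cdot,t) \rangle}_{\cH} = 0$ for every $t \in \cX$. Now I would invoke the reproducing property from Theorem~\ref{thm:kerProps}, which states that $f(t) = {\langle f, \xi(\cdot,t) \rangle}_{\cH}$. Combining the two yields $f(t) = 0$ for every $t \in \cX$, i.e.\ $f$ is the zero element of $\cH$.

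Having shown $G^{\perp} = \{0\}$, the conclusion $G = (G^{\perp})^{\perp} = \cH$ follows from the orthogonal decomposition $\cH = G \oplus G^{\perp}$ valid for closed subspaces. In effect there is no genuine analytic obstacle here: the entire content of the statement is packaged into the reproducing property, which converts the abstract condition ``$f$ is orthogonal to every kernel slice'' into the concrete condition ``$f$ vanishes pointwise.'' The only point requiring a small amount of care is the passage between orthogonality to the span and orthogonality to its closure, which is handled by the continuity of ${\langle \cdot,\cdot \rangle}_{\cH}$; once that is noted, the remainder of the argument is entirely routine.
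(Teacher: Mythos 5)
Your proof is correct and is essentially identical to the paper's own argument: both take an element of $G^{\perp}$, apply the reproducing property to conclude it vanishes pointwise, and deduce $G = \cH$ from $G^{\perp} = \{0\}$. The extra detail you supply (continuity of the inner product, the decomposition $\cH = G \oplus G^{\perp}$) merely spells out standard facts the paper leaves implicit.
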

\begin{proof}
    Suppose $g \in G^{\perp}$. Then, for each $t \in \cX$, $0 = {\langle g, \xi(\cdot, t) \rangle}_{\cH} = g(t)$.
    Hence $G^{\perp} = \{0\}$ and $G = \cH$.
\end{proof}
If the RKHS in Theorem~\ref{thm:densKer} is separable, then a countable dense family of kernel slices exists in $\cH$. Finally we note, that Aronszajn's general existence theorem~\cite{aronszajn1950theory, paulsen2016introduction} states that for any set $\cX$ and function $f : \cX \to \C$, there exists an RKHS $\cH$ which contains $f$.

From a practical point of view, it is important to also mention the representer theorem~\cite{representer}. 
\begin{theorem}[Representer theorem]
\label{thm:representer}
    Let $\cH$ be an RKHS of functions $\cX \to \cY$ with kernel $\xi(\cdot,\cdot)$ and let $\{ (t_k, y_k) \}_{k=1}^{q} \subset \cX \times \cY$. Let $E : \left(\cX \times \cY \times \cY\right)^q \to [0,\infty)$ be an arbitrary error functional. Finally, let $g : [0, \infty) \to \R$ be a strictly increasing function. Then, there exists an  $f^{\ast} \in \cH$ such that
    \begin{multline*}
        f^{\ast} = \argmin_{f \in \cH} \\ \left( E((t_1, f(t_1), y_1), \ldots, (t_q, f(t_q), y_q)) + g({\| f \|}_{\cH}) \right).
    \end{multline*}
    Furthermore, $f^{\ast}$ attains the structure
    $$
        f^{\ast} = \sum_{k=1}^q c_k \xi(\cdot, t_k) \quad (c_k \in \C, \ k=1,\ldots,q).
    $$
\end{theorem}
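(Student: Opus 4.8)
The plan is to leverage the finite-dimensionality of the solution space together with the reproducing property recorded in Theorem~\ref{thm:kerProps}. First I would introduce the subspace $V := \myspan\{ \xi(\cdot, t_k) : k=1,\ldots,q \} \subseteq \cH$ generated by the kernel slices at the sample points. Since $V$ is finite-dimensional it is closed, so every $f \in \cH$ admits a unique orthogonal decomposition $f = f_V + f_\perp$ with $f_V \in V$ and $f_\perp \in V^\perp$. The decisive step is to identify the orthogonal complement concretely: because the inner product is linear and $V$ is spanned by the slices, $h \in V^\perp$ holds exactly when ${\langle h, \xi(\cdot, t_k) \rangle}_{\cH} = 0$ for all $k$, which by the reproducing property is equivalent to $h(t_k) = 0$ for $k=1,\ldots,q$. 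Applying this to $f_\perp$ gives $f(t_k) = f_V(t_k) + f_\perp(t_k) = f_V(t_k)$ at every data point, so the loss term $E$ sees $f$ only through its projection $f_V$.

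Next I would control the regularizer. The Pythagorean identity ${\| f \|}_{\cH}^2 = {\| f_V \|}_{\cH}^2 + {\| f_\perp \|}_{\cH}^2$ yields ${\| f_V \|}_{\cH} \le {\| f \|}_{\cH}$, with equality if and only if $f_\perp = 0$. Since $g$ is strictly increasing, this propagates to $g({\| f_V \|}_{\cH}) \le g({\| f \|}_{\cH})$, again with equality only when $f_\perp = 0$. Combining the two observations, the objective evaluated at $f_V$ never exceeds the objective at $f$, and is strictly smaller whenever $f_\perp \neq 0$. Hence nothing is lost by confining the search to $V$: the infimum over $\cH$ equals the infimum over $V$, and any minimizer must already lie in $V$, which forces the asserted form $f^{\ast} = \sum_{k=1}^q c_k \xi(\cdot, t_k)$.

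To establish that the infimum is genuinely attained, I would reduce the problem to finitely many dimensions. Writing $f_V = \sum_{k=1}^q c_k \xi(\cdot, t_k)$ identifies $V$ with a subset of $\C^q$ through the Gram matrix of the slices, whose entries are $\xi(t_j, t_k)$; by the strict positive definiteness in Theorem~\ref{thm:kerProps}~b), this matrix is positive definite when the $t_k$ are distinct, so ${\| f_V \|}_{\cH}$ grows without bound as $\| c \| \to \infty$. The objective then becomes a function of $c \in \C^q$ alone, and a minimizer can be produced by a standard coercivity-plus-lower-semicontinuity argument on this finite-dimensional parameter space.

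The main obstacle is precisely this final existence step under the stated weak hypotheses: $E$ is only assumed nonnegative and $g$ only strictly increasing, with neither continuity nor growth built in. The structural reduction to $V$ is robust and requires nothing beyond the reproducing property and strict monotonicity of $g$; but attaining the minimum genuinely needs the regularizer to be coercive (so that $g({\|\cdot\|}_{\cH})$ blows up and rules out escape to infinity) together with lower semicontinuity of $E$ and $g$, which I would either invoke explicitly or note as tacit regularity standing behind the $\argmin$ in the statement.
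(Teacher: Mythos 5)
The paper does not actually prove this theorem: it is quoted from the literature (the citation \cite{representer}), so there is no internal proof to compare against. Judged on its own, your argument is the standard orthogonal-decomposition proof of the representer theorem, and the structural half is correct and complete: decomposing $f = f_V + f_\perp$ with $V = \myspan\{\xi(\cdot,t_k) : k = 1,\ldots,q\}$, using the reproducing property to show $f_\perp(t_k) = 0$, and invoking the Pythagorean identity plus strict monotonicity of $g$ to conclude that any minimizer must satisfy $f_\perp = 0$ is exactly how this result is established in the sources the paper cites. Your hedge that the Gram matrix is positive definite only ``when the $t_k$ are distinct'' is also appropriate; the paper's Theorem~\ref{thm:kerProps}~b) states strict positive definiteness without that caveat, which is another point where the paper is looser than the underlying facts.

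Your final concern is not merely a technicality — it is the real issue with the statement as written, and you have diagnosed it correctly. Under the stated hypotheses ($E$ an arbitrary $[0,\infty)$-valued functional, $g$ strictly increasing with no continuity or growth condition), the existence claim is false: take $g$ strictly increasing but bounded (so the regularizer is not coercive) and an $E$ whose infimum over the finite-dimensional coefficient space is not attained, e.g.\ a term behaving like $\eu^{-\operatorname{Re} f(t_1)}$; then no minimizer exists in $\cH$ at all. The classical formulation avoids this by asserting only that \emph{any} minimizer, if one exists, admits the kernel expansion — a conditional statement your orthogonality argument proves in full. So the correct reading is that your proof establishes everything that is provable here, and the gap you identified (coercivity of $g({\|\cdot\|}_{\cH})$ plus lower semicontinuity of $E$ and $g$ being tacitly assumed behind the $\argmin$) lies in the paper's statement rather than in your argument. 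If you wanted to close it honestly, you would either add those hypotheses explicitly or weaken the conclusion to the conditional form.
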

In other words, given an approximation (learning) problem over a finite dataset, an optimal approximant can always be found in the form of a kernel model. 

\begin{theorem}[Pointwise convergence theorem]
\label{thm:pointwise}
    Let $\cH$ be an RKHS of $\cX \to \cY$ functions. Let $f \in \cH$ and ${(f_n)}_{n=1}^\infty$ be a function sequence in $\cH$ satisfying
    $
        \lim_{n \to \infty} {\| f - f_n \|}_{\cH} = 0.
    $
    Then, 
    $$
        \lim_{n \to \infty} |f(t) - f_n(t)| = 0 
    $$
    for all $t \in \cX$. 
\end{theorem}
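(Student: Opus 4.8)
The plan is to exploit the one feature that distinguishes an RKHS from a generic Hilbert space of functions, namely the boundedness of the evaluation functional from Definition~\ref{def:rkhs}, which is precisely what lets norm convergence control pointwise values. First I would fix an arbitrary $t \in \cX$ and invoke the reproducing property from Theorem~\ref{thm:kerProps} to rewrite the pointwise error as an inner product against the kernel slice $\xi_t = \xi(\cdot, t)$:
$$
f(t) - f_n(t) = {\langle f, \xi_t \rangle}_{\cH} - {\langle f_n, \xi_t \rangle}_{\cH} = {\langle f - f_n, \xi_t \rangle}_{\cH}.
$$
This converts a claim about pointwise values into a claim about inner products, where the norm hypothesis can be brought to bear directly.

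The second step is to apply the Cauchy--Schwarz inequality, yielding
$$
|f(t) - f_n(t)| \le {\| f - f_n \|}_{\cH} \, {\| \xi_t \|}_{\cH}.
$$
The decisive observation is that ${\| \xi_t \|}_{\cH}$ is a finite constant once $t$ is fixed: applying the reproducing property to $\xi_t$ itself gives ${\| \xi_t \|}_{\cH}^2 = {\langle \xi_t, \xi_t \rangle}_{\cH} = \xi(t, t) < \infty$. Hence
$$
|f(t) - f_n(t)| \le {\| f - f_n \|}_{\cH} \, \sqrt{\xi(t,t)},
$$
and letting $n \to \infty$ forces the right-hand side to zero by the assumed convergence ${\| f - f_n \|}_{\cH} \to 0$. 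Since $t$ was arbitrary, the conclusion holds for every $t \in \cX$.

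I do not anticipate any genuine obstacle: the argument is a direct consequence of the boundedness built into the definition of an RKHS, combined with the reproducing property already established. The only point worth flagging is a conceptual one rather than a technical hurdle, namely that the controlling factor $\sqrt{\xi(t,t)}$ depends on $t$. Consequently the argument delivers \emph{pointwise} convergence but not, in general, uniform convergence; the latter would require an additional hypothesis such as a uniform bound $\sup_{t \in \cX} \xi(t,t) < \infty$. This is exactly the scope claimed by the theorem, so no strengthening of the hypotheses is needed.
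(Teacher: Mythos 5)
Your proof is correct and is exactly the canonical argument: rewrite the pointwise error via the reproducing property as ${\langle f - f_n, \xi_t \rangle}_{\cH}$, apply Cauchy--Schwarz, and use ${\| \xi_t \|}_{\cH} = \sqrt{\xi(t,t)} < \infty$. The paper does not spell out a proof but defers to~\cite{paulsen2016introduction}, where this same argument is given, so your approach matches the intended one; your closing remark that the bound is $t$-dependent (hence pointwise, not uniform, convergence) is an accurate and worthwhile observation.
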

Thus, in an RKHS, convergence in norm implies pointwise convergence. The proof of Theorem~\ref{thm:pointwise} can be found in~\cite{paulsen2016introduction}. 

Next, we review Mercer's theorem, which describes the structure of reproducing kernels given by Definition~\ref{def:repKer}. For a deeper discussion and proof, we recommend~\cite{steinwart2012mercer} and~\cite{paulsen2016introduction}. Here we present a simplified version that is tied to the setting of this paper.
\begin{theorem}[Mercer's theorem]
   Assume that the kernel $\xi : \cX \times \cX \to \C$ satisfies Theorem~\ref{thm:kerProps} along with
    $$
        \int_{\cX} \xi(x, x) \,\du x < \infty.
    $$
    Let $L_2(\cX)$ denote the Hilbert space of measurable and square-integrable functions $\cX \to \C$.
    Then, the integral operator
    \begin{equation*}
     T: L_2(\cX) \to  L_2(\cX) \quad \text{with} \quad (Tf) (x) := \int_{\cX} \xi(x, t) f(t)\, \du t
    \end{equation*}
    is a Hilbert-Schmidt operator and consequently compact, self-adjoint, and positive definite on $L_2(\cX)$. It follows that there exist countably many eigenvalues $\lambda_j > 0$ and corresponding  orthonormal eigenfunctions $\phi_j \in L_2(\cX) \ (j \in \N \cup \{0\})$ such that
    \begin{equation}
        \label{eq:MercerExp}
        \xi(x, t) = \sum_{j=0}^{\infty} \lambda_j \phi_j(x) \overline{\phi_j(t)}
    \end{equation}
    for a.e. $(x, t) \in \cX \times \cX$. Furthermore, the induced RKHS is given by
    \begin{equation}
        \label{eq:MercerH}
        \cH := \left\{ f = \sum_{j=0}^{\infty} w_j \phi_j \,: \, \sum_{j=0}^{\infty} \frac{|w_j|^2}{\lambda_j} < \infty  \right\}
    \end{equation}
    with $\cH$-inner product
    \begin{equation*}
    \left\langle \sum_{j=0}^{\infty} w_j \phi_j ,\sum_{j=0}^{\infty} d_j \phi_j\right\rangle_{\cH} := \sum_{j=0}^{\infty} \frac{w_j \overline{d_j}}{\lambda_j}.
    \end{equation*}
\end{theorem}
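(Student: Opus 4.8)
The plan is to deduce the entire statement from the spectral theory of compact self-adjoint operators, so the first task is to verify that $T$ falls into this class. First I would establish that $\xi \in L_2(\cX \times \cX)$: combining the positive definiteness from Theorem~\ref{thm:kerProps} with the induced Cauchy--Schwarz inequality gives $|\xi(x,t)|^2 \le \xi(x,x)\,\xi(t,t)$ for all $x,t \in \cX$, whence $\int_{\cX}\int_{\cX} |\xi(x,t)|^2 \,\du x \,\du t \le \left( \int_{\cX} \xi(x,x)\,\du x \right)^2 < \infty$ by hypothesis. A kernel in $L_2(\cX\times\cX)$ induces a Hilbert--Schmidt integral operator, so $T$ is Hilbert--Schmidt and in particular compact.

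Next I would record the two algebraic properties. Self-adjointness of $T$ follows directly from the Hermitian symmetry $\xi(x,t) = \overline{\xi(t,x)}$ after an application of Fubini's theorem. Positivity, i.e. $\langle Tf, f \rangle_{L_2} \ge 0$, follows from the positive definiteness of $\xi$: the quadratic form $\int_{\cX}\int_{\cX} \xi(x,t) f(t)\overline{f(x)}\,\du t\,\du x$ is a limit of the nonnegative sums $\sum_{k,k'} c_k \overline{c_{k'}} \xi(x_k, x_{k'})$ appearing in Theorem~\ref{thm:kerProps}b). Having established that $T$ is compact, self-adjoint, and positive, the spectral theorem yields an at most countable orthonormal system of eigenfunctions $\phi_j \in L_2(\cX)$ with eigenvalues $\lambda_j > 0$ accumulating only at $0$; discarding the kernel of $T$ leaves exactly the system in the statement.

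The heart of the argument is the expansion~\eqref{eq:MercerExp}. The functions $(x,t) \mapsto \phi_j(x)\overline{\phi_k(t)}$ form an orthonormal family in $L_2(\cX\times\cX)$, and computing the Fourier coefficients of $\xi$ against this family using $T\phi_j = \lambda_j \phi_j$ shows that the only nonzero coefficients are the diagonal ones, equal to $\lambda_j$; this gives convergence of $\sum_j \lambda_j \phi_j(x)\overline{\phi_j(t)}$ to $\xi$ in the $L_2(\cX\times\cX)$ norm immediately. I expect the main obstacle to be upgrading this to the claimed a.e. convergence. The cleanest route is to first prove the diagonal identity $\sum_j \lambda_j |\phi_j(x)|^2 = \xi(x,x)$ for a.e. $x$ (the partial sums are monotone increasing and dominated by $\xi(x,x)$, so they converge, and an integration argument pins the limit), and then control the off-diagonal tails by Cauchy--Schwarz, $\left| \sum_{j > n} \lambda_j \phi_j(x)\overline{\phi_j(t)} \right|^2 \le \big(\sum_{j>n}\lambda_j|\phi_j(x)|^2\big)\big(\sum_{j>n}\lambda_j|\phi_j(t)|^2\big)$, which tends to $0$ a.e. A full uniform/absolute Mercer statement would require a compact domain and continuity of $\xi$ via Dini's theorem, but since only a.e. convergence is claimed, the $L_2$ spectral theory together with this tail estimate suffices.

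Finally, to identify the RKHS I would verify that the space $\cH$ defined in~\eqref{eq:MercerH} has $\xi$ as its reproducing kernel and then invoke the one-to-one correspondence between positive definite kernels and RKHSs (Aronszajn's theorem, cited earlier). Concretely, the expansion gives $\xi(\cdot, t) = \sum_j \big(\lambda_j \overline{\phi_j(t)}\big) \phi_j$, whose coefficients $w_j = \lambda_j\overline{\phi_j(t)}$ satisfy $\sum_j |w_j|^2/\lambda_j = \sum_j \lambda_j |\phi_j(t)|^2 = \xi(t,t) < \infty$, so $\xi(\cdot,t) \in \cH$; and for any $f = \sum_j w_j \phi_j \in \cH$ one computes $\langle f, \xi(\cdot,t)\rangle_{\cH} = \sum_j w_j \phi_j(t) = f(t)$, which is exactly the reproducing property. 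Completeness of $\cH$ under the stated inner product is inherited from $\ell_2$ through the weights $1/\lambda_j$, and boundedness of point evaluation then follows from the reproducing property, confirming that $\cH$ is the RKHS associated with $\xi$.
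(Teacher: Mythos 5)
The paper does not actually prove this theorem: it is presented as background, with the proof deferred to the cited references (\cite{steinwart2012mercer}, \cite{paulsen2016introduction}). Measured against those standard proofs, your proposal follows essentially the same canonical route --- Cauchy--Schwarz on the kernel to get $\xi \in L_2(\cX\times\cX)$, hence Hilbert--Schmidt compactness, self-adjointness and positivity of $T$, the spectral theorem, and the weighted-$\ell_2$ description of $\cH$ with the reproducing property verified directly --- and all of those steps are correct as you state them.

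The one step you should not treat as routine is the pointwise domination $\sum_{j<n}\lambda_j|\phi_j(x)|^2 \le \xi(x,x)$ for a.e.\ $x$. Your implicit justification would be that the truncated kernel $\xi_n := \xi - \sum_{j<n}\lambda_j\,\phi_j\otimes\overline{\phi_j}$ is the kernel of the positive operator $T - \sum_{j<n}\lambda_j\,\langle\cdot,\phi_j\rangle\,\phi_j$ and hence has nonnegative diagonal; but for a merely measurable $L_2$ kernel the diagonal $\{(x,x)\}$ is a null set in $\cX\times\cX$, so ``the diagonal of the kernel of a positive operator is nonnegative a.e.'' is not automatic --- it requires fixing suitable versions of the eigenfunctions, which is precisely the technical content of the general-domain Mercer theorem in \cite{steinwart2012mercer} (in the classical setting this is where continuity of $\xi$ and compactness of $\cX$ enter, via Dini). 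Since the statement here only asserts equality for a.e.\ $(x,t)$, you can sidestep the issue entirely: your $L_2(\cX\times\cX)$ computation already shows the series converges in $L_2(\cX\times\cX)$ to $\xi$, and the Cauchy--Schwarz tail bound shows the series converges pointwise a.e.\ (absolutely) to \emph{some} limit; a subsequence of the partial sums converges a.e.\ to $\xi$, so the two limits agree a.e., which is exactly the claimed conclusion without ever invoking the diagonal identity as an exact equality. With that adjustment (or with an explicit appeal to the version-selection lemmas of the cited reference), your proof is complete and matches the standard argument the paper points to.
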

From Eq.~\eqref{eq:MercerExp} and Eq.~\eqref{eq:MercerH} it follows that furthermore, if $\{\varphi_j\}_{j=0}^{\infty} \subset \cH$ is a complete and orthonormal system in $\cH$, then
\begin{equation}
    \label{eq:ONRexp}
    \xi(x, t) := \sum_{j=0}^{\infty} \varphi_j(x) \overline{\varphi_j(t)}, 
\end{equation}
where convergence holds in the $\cH$-norm (see Eq.~\eqref{eq:MercerH}) and thus pointwise for all $x, t \in \cX$ (see Theorem~\ref{thm:pointwise}). We emphasize that the functions $\{\phi_j\}_{j=0}^{\infty}$ in Eq.~\eqref{eq:MercerExp} are orthonormal in $L_2(\cX)$, while the basis $\{ \varphi_j \}_{j=0}^{\infty}$ in Eq.~\eqref{eq:ONRexp} denote an arbitrary orthonormal system in $\cH$.

In order to properly discuss some problems related to the application of kernel methods, it is beneficial to review the so-called \textit{feature space} representation of kernel models~\cite{jorgensen2025new}. In particular, consider the following definition.
\begin{definition}[Feature map, feature space]
    \label{eq:featSpace}
    Given an RKHS $\cH$ of functions $\cX \to \cY$ with positive definite kernel $\xi(\cdot,\cdot)$, the Hilbert space $\fG$ is called a \emph{feature space}, if there exists a map $\Phi : \cX \to \fG$ such that 
    $$
        \xi(x, t) = {\langle \Phi(x), \Phi(t) \rangle}_{\fG} \quad (x, t \in \cX).
    $$
    The map $\Phi$ is called the \emph{feature map}. Denote the set of all possible pairs of feature maps and feature spaces for the kernel $\xi(\cdot,\cdot)$ by
    \begin{equation}
        \label{eq:feature}
        \fG_{\xi} := \left\{ (\Phi, \fG) \,: \,\xi(x, t) = {\langle \Phi(x), \Phi(t) \rangle}_{\fG} \text{ for all } x,t \in \cX  \right\}.
    \end{equation}
\end{definition}
The set defined in Eq.~\eqref{eq:feature} is never empty~\cite{jorgensen2025new}. For example, the pair $\Phi(x) = \xi_x$ and $\fG = \cH$ is always in $\fG_{\xi}$, because $\xi(x, t) = \langle \Phi(x), \Phi(t) \rangle_{\cH} = \langle \xi_x, \xi_t \rangle_{\cH}$ $(x, t \in \cX)$. Thus, every kernel $\xi(\cdot,\cdot)$ can be written in terms of a Hilbert space inner product given an appropriate pair $(\Phi, \fG) \in \fG_{\xi}$. Eq.~\eqref{eq:ONRexp} gives a non-trivial example, where $\fG := \ell_2$ (the Hilbert space of (unilateral) square-summable complex sequences) and $\Phi(x) := ( \varphi_j(x) )_{j=0}^{\infty} \in \ell_2$. Since the space $\ell_2$ is endowed with the inner product ${\langle f, g \rangle}_{\ell_2} := \sum_{j = 0}^\infty f_j \overline{g_j} \ (f, g \in \ell_2)$, for the $\ell_2$ sequences $\Phi(x) =( \varphi_j(x))_{j=0}^{\infty}$ and $ \Phi(t) = ( \varphi_j(t) )_{j=0}^{\infty}$, we obtain 
$$
    \xi(x, t) = {\langle \Phi(x), \Phi(t) \rangle}_{\ell_2} = \sum_{j=0}^{\infty} \varphi_j(x) \overline{\varphi_j(t)},
$$
which is exactly the formulation given in Eq.~\eqref{eq:ONRexp}.

We now discuss a well-known limitation of kernel methods. Despite all of the practical benefits of kernel methods discussed in this section, approximating a nonlinear function $F : \cX \to \cY$ using a kernel model is often computationally infeasible~\cite{rahimi}. In particular, to ensure that the kernel model approximates $F \in \cF$ well (with respect to a given loss $E$), a large dataset is required. Indeed, the number $q \in \N$ of learnable parameters $c_k \in \C \ (k=1,\ldots,q)$ could explode in the kernel model specified in Eq.~\eqref{eq:kernelModel}. One possibility to overcome this issue is to represent the kernel using an appropriate pair $(\Phi, \fG) \in \fG_{\xi}$, i.e.,
\begin{multline}
    \label{eq:linKer}
    f = \sum_{k=1}^{q} c_k \xi(\cdot, t_k) = \sum_{k=1}^{q} c_k {\langle \Phi(\cdot), \Phi(t_k) \rangle}_{\fG} \\= \left\langle \Phi(\cdot), \sum_{k=1}^{q} \overline{c_k} \Phi(t_k) \right\rangle_{\fG} = {\langle \Phi(\cdot), \overline{w} \rangle}_{\fG}.
\end{multline}
We recall that the number $q \in \N$ of parameters $c_k \in \C \ (k=1,\ldots,q)$ that define the kernel model $f$ is the same as the number of data samples in the dataset. Thus, this number can be very large. On the other hand, if the feature space $\fG$ in Eq.~\eqref{eq:linKer} is of dimension $D \in \N$, where $D \ll q$, then $w \in \fG$ describing the kernel model might more easily be found. Since any $D$-dimensional Hilbert space is isometrically isomorphic to $\C^D$, without loss of generality, we may consider a kernel model $f$ of the form
\begin{multline}
    \label{eq:featureModel}
    f = \sum_{k=1}^q c_k {\langle \Phi(\cdot), \Phi(t_k) \rangle}_{\C^D} = \left\langle \Phi(\cdot), \sum_{k=1}^q \overline{c_k} \Phi(t_k) \right\rangle_{\C^D} \\ = 
    \langle \Phi(\cdot), \overline{w} \rangle_{\C^D} = \sum_{j=0}^{D-1} w_j \varphi_j(\cdot).
\end{multline}
Many prior works~\cite{rahimi, NEURIPS2018_4e5046fc} use feature maps to finite-dimensional Hilbert spaces to mitigate the high dimensionality of the problem. That is, they propose to use kernel models that can be described according to Eq.~\eqref{eq:featureModel} where $D \ll q$. We make use of the notation
\begin{equation}
    \label{eq:truncKer}
    \xi_D(x, t) :=  {\langle \Phi(x), \Phi(t) \rangle}_{\C^D} = \sum_{j=0}^{D-1} \varphi_j(x) \overline{\varphi_j}(t) \quad (x, t \in \cX),
\end{equation}
where $\{ \varphi_j \}_{j=0}^{D-1}$ form a linearly independent function system in a finite-dimensional RKHS. Previous methods~\cite{rahimi} promote the use of feature maps, so that the truncated kernel satisfies $\xi_D(\cdot,\cdot) \approx \xi(\cdot,\cdot)$ in an appropriate metric.

We note that the adaptive kernel models and kernel methods proposed in this study (see Eq.~\eqref{eq:varproj}) improve this idea. In particular, we can write the proposed adaptive kernels from Eq.~\eqref{eq:varproj} in the feature map notation
\begin{equation}
    \label{eq:adaptFeatures}
    \xi^{\Lambda}_D(x, t) = {\langle \Phi^{\Lambda}(x), \Phi^{\Lambda}(t) \rangle}_{\C^D} \quad (x, t \in \cX, \Lambda \in \cL),
\end{equation}
that is, we can define parameter-dependent feature maps to obtain kernels for families of $D$-dimensional RKHSs. Then, obtaining a good approximation of $F \in \cF$ (with respect to the loss $E$) in the  RKHS $\cH(\Lambda)$ induced by $\xi_D^{\Lambda}(\cdot,\cdot)$, one can exploit the flexibility provided by the parameterization $\Lambda \mapsto \cH(\Lambda)$ in several ways. We can, for example, follow the ideas in~\cite{rahimi} and choose $\Lambda \in \cL$, such that $\xi_D^{\Lambda}(\cdot,\cdot) \approx \xi(\cdot,\cdot)$, where $\xi$ is the kernel of an infinite-dimensional RKHS which contains a good approximation of $F \in \cF$. This is discussed in the next section. Alternatively, we can build adaptive kernel models (see Eq.~\eqref{eq:varproj}) using the feature-induced adaptive kernels in Eq.~\eqref{eq:adaptFeatures} and solve Eq.~\eqref{eq:optimProb} directly to obtain scalable kernel methods, which can approximate highly nonlinear target functions $F \in \cF$. We exploit the fact that $D$ in Eq.~\eqref{eq:adaptFeatures} is chosen to ensure that training is computationally feasible in the presence of a large dataset (see Eq.~\eqref{eq:linKer}). Finally, we mention the following theorem from~\cite{invariantMetrik}.
\begin{theorem}[Optimal kernel in nested RKHSs]
\label{thm:nested}
    Let $\cH_1$ and $\cH_2$ be two RKHSs of functions $\cX \to \cY$ induced by the kernels $\xi_1(\cdot,\cdot)$ and $\xi_2(\cdot,\cdot)$, respectively, such that
    $$
        \cH_1 \subset \cH_2.
    $$
    Assume that ${\langle f, g \rangle}_{\cH_1} = {\langle f, g \rangle}_{\cH_2}$ if $f,g \in \cH_1$. Let $X\subseteq \cX$ and $\cH_{m, X} := \myspan\{ \xi_m(\cdot, x) : x \in X\} \subseteq \cH_m$ $(m=1,2)$. For $m=1,2$, consider further the orthogonal projection operators 
    \begin{equation*}
    \cP_{m, X} : \cF \to \cH_{m, X}, \quad \cP_{m, X}f := \argmin_{g \in \cH_{m,X}} \left\| f-g \right\|_{\cH_m}.
    \end{equation*}
    
    Then, for any $F \in \cH_1$ and $X \subseteq \cX$, 
    $$
        {\|F - \cP_{1, X} F\|}_{\cH_2} \leq {\|F - \cP_{2, X} F\|}_{\cH_2}
    $$
    holds.
\end{theorem}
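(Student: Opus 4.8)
The plan is to exploit the fact that an orthogonal projection onto the span of kernel slices is exactly a minimum-norm interpolation operator, and then to compare the two residual norms through Pythagorean identities taken in the single ambient norm $\|\cdot\|_{\cH_2}$.

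First I would record the interpolation property shared by both operators. For $m=1,2$, the residual $F-\cP_{m,X}F$ is $\cH_m$-orthogonal to every slice $\xi_m(\cdot,x)$ with $x\in X$; invoking the reproducing property (Theorem~\ref{thm:kerProps}, part c) converts this orthogonality into the pointwise statement $(\cP_{m,X}F)(x)=F(x)$ for all $x\in X$. Thus \emph{both} $\cP_{1,X}F$ and $\cP_{2,X}F$ interpolate $F$ on $X$, which is the crucial common feature that will let me compare objects living in different solution spaces.

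Next I would set up two Pythagorean decompositions, both measured in $\|\cdot\|_{\cH_2}$. For $m=2$ this is immediate: since $\cP_{2,X}F$ is the $\cH_2$-orthogonal projection of $F$ onto $\cH_{2,X}$, the residual is $\cH_2$-orthogonal to $\cP_{2,X}F$, giving $\|F-\cP_{2,X}F\|_{\cH_2}^2=\|F\|_{\cH_2}^2-\|\cP_{2,X}F\|_{\cH_2}^2$. For $m=1$ the same identity holds in $\cH_2$, but here I would lean on the compatibility hypothesis: both $\cP_{1,X}F$ and $F-\cP_{1,X}F$ lie in $\cH_1$, so their $\cH_2$-inner product equals their $\cH_1$-inner product, which vanishes because $\cP_{1,X}F$ is the $\cH_1$-projection of $F$ onto $\cH_{1,X}$. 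This yields $\|F-\cP_{1,X}F\|_{\cH_2}^2=\|F\|_{\cH_2}^2-\|\cP_{1,X}F\|_{\cH_2}^2$, so both residual norms are governed by the same constant $\|F\|_{\cH_2}^2$ minus the squared $\cH_2$-norm of the respective projection.

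The heart of the argument, and the step I expect to be the main obstacle, is comparing $\|\cP_{1,X}F\|_{\cH_2}$ and $\|\cP_{2,X}F\|_{\cH_2}$ even though $\cH_{1,X}$ and $\cH_{2,X}$ are in general incomparable subspaces, so neither projection is admissible in the other's variational problem. I would resolve this through the minimum-norm interpolation principle in $\cH_2$: among all $h\in\cH_2$ with $h|_X=F|_X$, the element of least $\cH_2$-norm is precisely $\cP_{2,X}F$ (any such $h$ satisfies $h-\cP_{2,X}F\in\cH_{2,X}^{\perp}$, whence $\|h\|_{\cH_2}^2=\|\cP_{2,X}F\|_{\cH_2}^2+\|h-\cP_{2,X}F\|_{\cH_2}^2$). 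The key realization is that $\cP_{1,X}F$ is itself an admissible competitor in this problem, because $\cP_{1,X}F\in\cH_1\subseteq\cH_2$ and, by the first step, it interpolates $F$ on $X$. Hence $\|\cP_{2,X}F\|_{\cH_2}\le\|\cP_{1,X}F\|_{\cH_2}$; substituting this into the two decompositions gives $\|F-\cP_{1,X}F\|_{\cH_2}^2\le\|F-\cP_{2,X}F\|_{\cH_2}^2$, and taking square roots concludes. The only technicality I would flag is well-definedness of the projections when $X$ is infinite (so that $\cH_{m,X}$ need not be closed); this is harmless in the data-driven setting where $X$ is finite, and otherwise one passes to closures, which leaves every orthogonality identity above intact.
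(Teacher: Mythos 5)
Your proof is correct: the interpolation property of both projections, the two Pythagorean identities taken in the $\cH_2$-norm (the one for $m=1$ legitimately transferred via the inner-product compatibility hypothesis, since both $\cP_{1,X}F$ and $F-\cP_{1,X}F$ lie in $\cH_1$), and the minimum-norm-interpolant characterization of $\cP_{2,X}F$ with $\cP_{1,X}F$ as an admissible competitor combine exactly as you claim to give $\|F-\cP_{1,X}F\|_{\cH_2}^2 = \|F\|_{\cH_2}^2-\|\cP_{1,X}F\|_{\cH_2}^2 \le \|F\|_{\cH_2}^2-\|\cP_{2,X}F\|_{\cH_2}^2 = \|F-\cP_{2,X}F\|_{\cH_2}^2$. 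Note that the paper itself states Theorem~\ref{thm:nested} without proof, citing~\cite{invariantMetrik}; your argument is the standard self-contained one for this result, and your closing remark about passing to closures when $X$ is infinite correctly addresses the only well-definedness gap in the statement as given.
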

Theorem~\ref{thm:nested} states that if the map-to-be-learned is fully contained in $\cH_1$, then a kernel method projecting to the subspaces of the smaller space $\cH_1$ is always preferable. In other words, the best kernel model is obtained for $F$, if the dimension of the RKHS coinciding with the range of the operator given in Eq.~\eqref{eq:varproj} is as small as possible. This philosophy fits well with the methods proposed in the current study, because for certain choices of the parametrization $\Lambda \mapsto \cH(\Lambda)$ in Eq.~\eqref{eq:varproj} we can obtain a good approximation of $F$, without increasing the dimensionality of $\cH(\Lambda)$ (see section~\ref{sec:adaptker}).

\section{Approximating kernels of RKHSs}
\label{sec:linker}
In this section, we are interested in deriving an error estimate for the approximation
\begin{equation}
\label{eq:apprKer}
    \xi(x, t) \approx \sum_{j=0}^{D-1} \varphi_j(x) \overline{\varphi_j(t)}= \xi_D(x, t) \quad (x, t \in \cX), 
\end{equation}
where $\xi(\cdot,\cdot)$ is the reproducing kernel of the RKHS $\cH$ and $\{\varphi_j\}_{j=0}^{D-1} \subset \cH$ is an orthonormal system.

\subsection{Kernel approximation with different basis expansions}
\label{subsec:kernLinAdapt}
Recall that $\cH$ contains functions $\cX \to \cY$ where $\cX \subset \C^N$ and $\cY \subset \C$. We restrict our investigation to so-called tensor product systems. That is, we assume that for a fixed index $\boldsymbol{j} \in \N^N$, the basis function in Eq.~\eqref{eq:apprKer} corresponding to $\boldsymbol{j}$ is given by
$$
    \varphi_{\boldsymbol{j}}(x_1,\ldots,x_N) = \prod_{v=1}^N \phi_{\boldsymbol{j}_v}(x_v) \quad \left(x = [x_1,\ldots,x_N]^\top \in \cX\right).
$$
Here, we assume that there exists an RKHS $\fH$ of $\C \to \C$ functions generated by the reproducing kernel 
\begin{equation}
    \label{eq:tensprod}
    \zeta(x,t) := \sum_{j=0}^{\infty} \phi_j(x) \overline{\phi_j(t)} \quad (x,t \in \C),
\end{equation}
where $\{ \phi_j \}_{j=0}^{\infty}$ is a complete and orthonormal system in $\fH$. In order to simplify notation, we shall assume lexicographic ordering of the multi indices $\boldsymbol{j} \in \N^N$. That is, for $\boldsymbol{j}, \boldsymbol{n} \in \N^{N}$, we say $\boldsymbol{j} > \boldsymbol{n}$
if ${\| \boldsymbol{j} \|}_1 > {\|\boldsymbol{n}\|}_1$. In case we have ${\| \boldsymbol{j} \|}_1 = {\|\boldsymbol{n}\|}_1$, we still say $\boldsymbol{j} > \boldsymbol{n}$, if for the smallest index $s=1,\ldots,N$ for which $\boldsymbol{j}_s \neq \boldsymbol{n}_s$ we have $\boldsymbol{j}_s > \boldsymbol{n}_s$. This defines a total ordering of the indices $\boldsymbol{j} \in \N^N$. We note that this ordering is not unique and others can also be considered. To simplify discussion, we identify $\{ \varphi_{\boldsymbol{j}} \}_{\boldsymbol{j} \in \N^N} = \{ \varphi_j \}_{j=0}^{\infty}$ using this ordering and assume that the basis functions satisfy the tensor product property in Eq.~\eqref{eq:tensprod}.

In order to obtain a practical error estimate for the approximation~\eqref{eq:apprKer}, we consider the pointwise error
$$
   \left| \xi(x, t) - \xi_D(x, t) \right| \quad (x, t \in \cX),
$$
where the approximation $\xi_D$ is defined according to Eq.~\eqref{eq:truncKer}. We have the following theorem.
\begin{theorem}[Error of kernel approximation]
    \label{thm:kernelErr}
    Let $\cH$ be a separable RKHS of  functions $\cX \to \cY$ $(\cX \subset \C^N, \cY \subset \C, \ N \in \N)$ generated by the reproducing kernel $\xi : \cX \times \cX \to \C$. Let $\{ \varphi_j \}_{j=0}^{\infty} \subset \cH$ be a fixed complete orthonormal system. Then, for any $x,t \in \cX$, we have
    \begin{multline}
        \label{eq:errest}
        \left| \xi(x, t) - \xi_D(x, t) \right| \\ \leq \left(\sum_{j=D}^{\infty} \left| \varphi_j(x) \right|^2\right)^{1/2} \left(\sum_{j=D}^{\infty} \left| \varphi_j(t) \right|^2\right)^{1/2} \quad (D \in \N).
    \end{multline}
\end{theorem}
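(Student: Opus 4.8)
The plan is to reduce the statement to a single application of the Cauchy--Schwarz inequality on the tail of the orthonormal kernel expansion. The starting point is Eq.~\eqref{eq:ONRexp}, which guarantees that for the complete orthonormal system $\{\varphi_j\}_{j=0}^{\infty} \subset \cH$ one has $\xi(x,t) = \sum_{j=0}^{\infty} \varphi_j(x)\overline{\varphi_j(t)}$ with $\cH$-norm convergence, and hence (by Theorem~\ref{thm:pointwise}) pointwise convergence for every $x,t \in \cX$. Subtracting the definition of $\xi_D$ in Eq.~\eqref{eq:truncKer} from this expansion shows that the approximation error is exactly the tail series $\xi(x,t) - \xi_D(x,t) = \sum_{j=D}^{\infty} \varphi_j(x)\overline{\varphi_j(t)}$.

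First I would establish that the relevant coefficient sequences lie in $\ell_2$, so that the tail sum is absolutely convergent and the infinite-dimensional Cauchy--Schwarz inequality is legitimate. Evaluating the expansion on the diagonal, i.e. setting $t = x$, yields $\sum_{j=0}^{\infty} |\varphi_j(x)|^2 = \xi(x,x) < \infty$, and likewise with $x$ replaced by $t$. Hence both $(\varphi_j(x))_{j=0}^{\infty}$ and $(\varphi_j(t))_{j=0}^{\infty}$ belong to $\ell_2$, and a fortiori their tails indexed by $j \geq D$ are square-summable.

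The main step is then to bound the modulus of the tail. Applying the Cauchy--Schwarz inequality in $\ell_2$ to the sequences $(\varphi_j(x))_{j \geq D}$ and $(\varphi_j(t))_{j \geq D}$ gives
$$\left| \sum_{j=D}^{\infty} \varphi_j(x) \overline{\varphi_j(t)} \right| \leq \left( \sum_{j=D}^{\infty} |\varphi_j(x)|^2 \right)^{1/2} \left( \sum_{j=D}^{\infty} |\varphi_j(t)|^2 \right)^{1/2},$$
which is precisely the claimed bound in Eq.~\eqref{eq:errest}.

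There is no serious obstacle here: the statement follows directly from the orthonormal expansion already derived together with the elementary Cauchy--Schwarz inequality. The only point deserving a moment of care is justifying that the infinite tail may be treated as an $\ell_2$ inner product, i.e. verifying square-summability at each fixed $x$ and $t$; this is exactly what the diagonal evaluation above supplies. Notably, neither the tensor-product structure of Eq.~\eqref{eq:tensprod} nor the lexicographic ordering of multi-indices is needed for this abstract bound — those enter only when one later specializes the tail estimate to a concrete infinite-dimensional RKHS such as $H_2(\D)$.
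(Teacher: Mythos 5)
Your proof is correct and is essentially the paper's argument written in coordinates: the paper decomposes $\xi_x = \cP_D \xi_x + e_{D,x}$, shows the error equals $\langle e_{D,x}, e_{D,t}\rangle_{\cH}$, and applies the Cauchy--Schwarz inequality in $\cH$, which under Parseval's identity is exactly your $\ell_2$ Cauchy--Schwarz applied to the tail sequences $(\varphi_j(x))_{j \ge D}$ and $(\varphi_j(t))_{j \ge D}$. Your diagonal evaluation $\sum_{j=0}^{\infty} |\varphi_j(x)|^2 = \xi(x,x) < \infty$ is a clean substitute for the paper's projection bookkeeping, and your closing remark that neither the tensor-product structure nor the multi-index ordering is needed is accurate --- the paper's proof is likewise purely abstract.
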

\begin{proof}
    Since $\cH$ is an RKHS with reproducing kernel $\xi(\cdot,\cdot)$, we have $\xi(x, t) = {\langle \xi_x, \xi_t \rangle}_{\cH}$, where $\xi_t(x) = \xi(x, t)$. This implies $\xi_t \in \cH \ (t \in \cX)$. However, since $\{ \varphi_j \}_{j=0}^{\infty}$ is complete and orthonormal in $\cH$, we have
    $$
       \left\| \xi_t - \sum_{j=0}^{\infty} {\langle \xi_t, \varphi_j \rangle}_{\cH} \varphi_j \right\|_{\cH} = 0,
    $$
    where ${\| f \|}_{\cH} := \sqrt{{\langle f, f \rangle}_{\cH}}$ $ (f \in \cH)$. From this, by Theorem~\ref{thm:pointwise} and the continuity of the absolute value function, we have
    $$
        \lim_{D \to \infty} \left| \xi_t(x) - \sum_{j=0}^{D-1} {\langle \xi_t, \varphi_j \rangle}_{\cH} \varphi_j(x) \right| = 0 \quad (x \in \cX).
    $$
    Consider the projection $\cP_D : \cH \to \myspan \{ \varphi_0, \ldots, \varphi_{D-1} \} \subset \cH$ $(D \in \N)$ defined as
    \begin{equation}
        \label{eq:Pd}
        \cP_D f := \sum_{j=0}^{D-1} {\langle f, \varphi_j \rangle}_{\cH} \varphi_j \quad (f \in \cH).
    \end{equation}
    Using this, we can define the error maps 
    $$
    e_{D, t} := \xi_t - \cP_D \xi_t = \sum_{j=D}^{\infty} {\langle \xi_t, \varphi_j \rangle}_{\cH} \varphi_j \quad (t \in \cX).
    $$
    By the orthonormality of $\{ \varphi_j \}_{j=0}^{\infty}$ we obtain
    $$
        \langle f, e_{D,t} \rangle_{\cH} = 0 \quad (f \in \myspan\{ \varphi_0,\ldots,\varphi_{D-1} \} \subset \cH).
    $$
    As a consequence we have
    \begin{equation}
        \label{eq:Riesz}
        {\langle \cP_D \xi_t, e_{D,x} \rangle}_{\cH} = 0 \quad (t, x \in \cX).
    \end{equation}
    From this, we obtain
    \begin{align}
        \label{eq:xierr}
        \begin{split}
        \xi(x, t) &= {\langle \xi_x, \xi_t \rangle}_{\cH} \\ &=  {\langle \cP_D \xi_x+ e_{D,x}, \cP_D \xi_t + e_{D,t} \rangle}_{\cH} \\ &= {\langle \cP_D \xi_x, \cP_D \xi_t \rangle}_{\cH} + {\langle \cP_D \xi_x, e_{D,t} \rangle}_{\cH} \\ & \qquad + \langle e_{D,x},\cP_D \xi_t  \rangle_{\cH}
        + \langle e_{D,x}, e_{D,t} \rangle_{\cH} \\ &\stackrel{\eqref{eq:Riesz}}{=} 
        {\langle \cP_D \xi_x, \cP_D \xi_t \rangle}_{\cH} + {\langle e_{D,x}, e_{D,t} \rangle}_{\cH}.
     \end{split}
    \end{align}
    Noting that $\{\varphi_j\}_{j=0}^{\infty}$ is an orthonormal system, we have
    \begin{equation}
        \label{eq:ons}
        {\langle \varphi_j, \varphi_n \rangle}_{\cH} = \delta_{jn} \quad (j,n \in \N \cup \{0\}).
    \end{equation}
    Thus, 
    \begin{align}
        \label{eq:xiD}
        \begin{split}
        \left\langle \cP_D \xi_x, \cP_D \xi_t \right\rangle_{\cH} &\stackrel{\eqref{eq:Pd}}{=} \left\langle \sum_{j=0}^{D-1} {\langle \xi_x, \varphi_j \rangle}_{\cH} \varphi_j,  \sum_{n=0}^{D-1} {\langle \xi_t, \varphi_n \rangle}_{\cH} \varphi_n \right\rangle_{\cH} \\ &=
        \sum_{j,n=0}^{D-1}       {\langle \xi_x, \varphi_j \rangle}_{\cH} \overline{ {\langle \xi_t, \varphi_n \rangle}_{\cH} } {\langle \varphi_j, \varphi_n \rangle}_{\cH} \\ &\stackrel{\textrm{Thm.~\ref{thm:kerProps} c)}}{=} 
        \sum_{j,n=0}^{D-1} \varphi_j(x) \overline{\varphi_n(t)} {\langle \varphi_j, \varphi_n \rangle}_{\cH} \\ &\stackrel{\eqref{eq:ons}}{=} \sum_{j=0}^{D-1} \varphi_j(x) \overline{\varphi_j(t)} = \xi_D(x, t).
      \end{split}
    \end{align}
    Eq.~\eqref{eq:xierr} and Eq.~\eqref{eq:xiD} give
    \begin{equation}
        \label{eq:errInner}
        |\xi(x,t) - \xi_D(x, t)| = \left|{\langle e_{D, x}, e_{D, t} \rangle}_{\cH}\right| \quad (x,t \in \cX).
    \end{equation}
    Note that since $\{ \varphi_j \}_{j=0}^{\infty}$ is complete and orthonormal in $\cH$ and by considering Theorem~\ref{thm:pointwise} and Theorem~\ref{thm:kerProps} c), for any $t \in \cX$ we have
    $$
        \xi(x, t) = \xi_t(x) = \sum_{j=0}^{\infty} \varphi_j(x) \overline{\varphi_j(t)}.
    $$
    Using this and Eq.~\eqref{eq:Pd} the statement follows from applying the Cauchy-Schwarz inequality to the right side of Eq.~\eqref{eq:errInner}.
\end{proof}

\begin{remarks}
\label{rem:errest}
\begin{enumerate}[label = \alph*)]
    \item In practice, $\cX$ is often assumed to be bounded, with $\{\varphi_j\}_{j=0}^{\infty}$ consisting of bounded, continuous functions. Then, Eq.~\eqref{eq:errest} can be written as
    \begin{equation*}
        \left| \xi(x, t) - \xi_D(x, t) \right| \leq \sup_{z \in \cX} \sum_{j=D}^{\infty} \left| \varphi_j(z) \right|^2 \quad (x, t \in \cX).
    \end{equation*}
    For concrete examples, this expression can sometimes be written in closed form  (see subsection~\ref{subsec:kernAprEx}).
    \item A consequence of Theorem~\ref{thm:kernelErr} is that for a fixed $D$, the error of the kernel approximation depends on the properties of $\{\varphi_j\}_{j=0}^{\infty}$. That is, choosing a different basis to express the truncated kernel from Eq.~\eqref{eq:truncKer} can yield vastly different approximations to a fixed kernel $\xi(\cdot,\cdot)$ (for the same $D$). Previous approaches to approximating kernels (see, e.g.,~\cite{rahimi, NEURIPS2018_4e5046fc}) advocate for using a fixed set of basis functions (e.g., in the case of Random Fourier Features, the basis is defined by trigonometric functions). By Theorem~\ref{thm:kernelErr}, this choice should also depend on the behavior of the kernel $\xi(\cdot,\cdot)$. Consequently, we can consider the following problem. Consider a parametrization $\Lambda \mapsto \{ \varphi_j^{\Lambda} \}_{j=0}^{\infty}$, where the function system is complete and orthonormal in a corresponding RKHS $\cH(\Lambda)$. Fixing $D \in \N$, we would like to find $\Lambda \in \cL$, such that the error in Theorem~\ref{thm:kernelErr} is minimal.
\end{enumerate}
\end{remarks}

\subsection{An example in $H_2(\D)$}
\label{subsec:kernAprEx}

We demonstrate the benefits of carefully choosing the orthonormal system $\{\varphi_j\}_{j=0}^{\infty} \subset \cH$  to construct the truncated kernel in Eq.~\eqref{eq:truncKer} through a concrete example. More precisely, we show how a parameterized family of orthonormal functions $\{ \varphi_j^{\Lambda} \}_{j=0}^{D-1} \subset \cH(\Lambda) \ (\Lambda \in \cL)$ can be used to approximate the kernel $\xi(\cdot,\cdot)$ of an infinite-dimensional RKHS $\cH$. Our task will be to find $\Lambda \in \cL$ for which we can obtain a minimal upper bound on $ \sup_{x, t \in X}  |\xi_D^{\Lambda}(x,t) - \xi(x,t)|$ for some $X \subset \cX$. In order to construct these $\Lambda$-dependent upper bounds, we use Theorem~\ref{thm:kernelErr}. We then show how the constructed error formula can be used to find the optimal choice for $\Lambda \in \cL$ to minimize the pointwise error bounds of the kernel approximation. We emphasize that, while we detail here a concrete example, using these steps, other $\Lambda \mapsto \{ \varphi_j^{\Lambda} \}_{j=0}^{D-1}$ parameterizations can be used to approximate the kernel $\xi(\cdot,\cdot)$ of an arbitrary infinite-dimensional RKHS $\cH$. The exact steps to repeat this process, however, always depend on the properties of the function system $\{ \varphi_j^{\Lambda} \}_{j=0}^{D-1}$.

Let $\cX$ be the open unit disk $\D := \{ z \in \C : |z| < 1 \}$ and denote its boundary by $\T := \{ z \in \C : |z| = 1 \}$. We shall also use the notation $\overline{\D} := \D \cup \T$. Denote by $\cA(\D)$ the disk algebra, that is, the set of complex analytic functions on $\D$. The Hardy-Hilbert space $H_2(\D)$ is defined as
\begin{multline}
    \nonumber
    H_{2}(\mathbb{D}) := \\ \left\{ f \in \mathcal{A}(\mathbb{D}) : \ \sup_{r < 1} \left( \frac{1}{2 \pi} \int_{-\pi}^{\pi} \left| f(r \eu^{\iu t}) \right|^2 \,\du t \right)^{1/2} < \infty \right\}.
\end{multline}
It is well-known~\cite{bprodbook}, that $H_2(\D)$ is an RKHS with the reproducing kernel
\begin{equation}
    \label{eq:CauchyKer}
    \xi(z, t) := \frac{1}{1 - \overline{t}z} \quad (z, t \in \D).
\end{equation}
The $H_2(\D)$ kernel $\xi(\cdot,\cdot)$ from Eq.~\eqref{eq:CauchyKer} is known as the Cauchy kernel~\cite{bprodbook}. 

The Takenaka-Malmquist (TM) functions~\cite{bprodbook} are defined as
\begin{multline}
    \label{eq:mtsys}
   \varphi_j^{\ba}(z) := \frac{\sqrt{1 - |a_j|^2}}{1 - \overline{a_j}z} \cdot \prod_{\nu=0}^{j-1} \frac{z - a_\nu}{1 - \overline{a_\nu}z} \\ = \frac{\sqrt{1 - |a_\nu|^2}}{1 - \overline{a_\nu}z} \cdot B_j^{\ba}(z), 
\end{multline}
where $\ba := (a_j)_{j=0}^{\infty}$ is a sequence in $\D$, $z \in \overline{\D} = \D \cup \T$, and \begin{equation}
\label{eq:bprod}
B_j^{\ba}(z) = \prod_{\nu=0}^{j-1} \frac{z - a_\nu}{1 - \overline{a_\nu}z}    
\end{equation}
 is a $j$-term monic Blaschke product~\cite{bprodbook}. These functions are orthonormal with respect to the $H_2(\D)$ inner product
$$
{\langle f,g \rangle}_{H_{2}(\mathbb{D})} := \frac{1}{2 \pi} \int_{-\pi}^{\pi} f(\eu^{\iu t})\overline{g(\eu^{\iu t})} \,\du t,
$$
that is ${\langle \varphi_j^{\ba}, \varphi_n^{\ba} \rangle}_{H_{2}(\mathbb{D})} = \delta_{jn}$ $(j,n = 0,1,\ldots)$.
 Blaschke products are self-maps on $\D$ and $\T$, and are inner functions of $H_2(\D)$ that play an important part in the factorization of Hardy spaces. If the generating sequence $\ba$ satisfies the Sz\'asz-Blaschke condition
\begin{equation}
    \label{eq:szasz}
    \sum_{j=0}^{\infty} (1 - |a_j|) = \infty,
\end{equation}
then the TM functions form a complete system in $H_2(\D)$. For a deep discussion on Blaschke products, TM functions, and corresponding topics, we recommend~\cite{bprodbook}. 

In the remainder of this section, we assume that the generating sequence $\ba$ is constant, that is $a_j = a \in \D$ for all $j \in \N$. Then the condition in Eq.~\eqref{eq:szasz} is satisfied and the corresponding TM functions are known as the discrete Laguerre system with
\begin{equation}
    \label{eq:dicLag}
    \varphi_j^{\ba}(z) = L_j^a(z) := \frac{\sqrt{1 - |a|^2}}{1-\overline{a}z} \left( \frac{z - a}{1 - \overline{a}z} \right)^j. 
\end{equation}
By the above, the discrete Laguerre system (for any choice of $a \in \D$) forms a complete orthonormal system in $H_2(\D)$. 

We are now ready to give an example for the approximation of the Cauchy kernel in Eq.~\eqref{eq:CauchyKer}. Consider an arbitrary parameter $a \in \D =: \cL$ and denote the corresponding discrete Laguerre function system by $\{ L_j^{a} \}_{j=0}^{\infty}$. Thus, in our example, we use the parameterization $a := \Lambda \mapsto \{ \varphi_{j}^{\Lambda} \}_{j=0}^{D-1} =: \{ L_{j}^{a} \}_{j=0}^{D-1}$. The $D$-dimensional subspaces $\cH(\Lambda) = \cH(a) := \myspan \{ L_k^a \,:\, j=0,\ldots,D-1 \}$ are also known as model spaces~\cite{bprodbook}. We fix $D \in \N$ and consider the approximation
\begin{equation*}
        \xi_D^a(z, t) := \sum_{j=0}^{D-1} L_j^a(z) \overline{L_j^a(t)} \quad (a \in \D).
\end{equation*}
We are interested in estimating $\sup_{z, t \in X}|\xi(z,t) - \xi_D^a(z,t)| \ (X \subset \D)$ for two reasons. First, this estimate will verify that for a fixed $D$, the approximation error depends on the parameter $a \in \D$ and thus selecting different parameters can improve the approximation. Second, it helps us to illustrate how the error estimate in Theorem~\ref{thm:kernelErr} can be used to select an optimal parameter $a$ (and hence Laguerre basis) to approximate the kernel of a given RKHS. 

By Theorem~\ref{thm:kernelErr}, we have 
\begin{multline}
    \label{eq:LagErr}
    \left| \xi(z,t) - \xi_D^a(z, t) \right| = \left| \frac{1}{1 - \overline{t}z} - \sum_{j=0}^{D-1} L_j^a(z) \overline{L_j^a(t)} \right| \\ \leq \left( \sum_{j=D}^{\infty} \left|L_j^a(z)\right|^2 \right)^{1/2} \left( \sum_{j=D}^{\infty} \left|L_j^a(t)\right|^2 \right)^{1/2}.
\end{multline}
    
To obtain a good expression for the right hand side of Eq.~\eqref{eq:LagErr}, first notice that
\begin{multline}
    \nonumber
    \sum_{j=D}^{\infty} \left|L_j^a(z)\right|^2 \stackrel{~\eqref{eq:dicLag}}{=} \sum_{j=D}^{\infty} \left| \frac{\sqrt{1 - |a|^2}}{1 - \overline{a}z}\right|^2 \left|B^a(z)\right|^{2j} \\ = 
    \frac{1 - |a|^2}{|1 - \overline{a}z|^2} \sum_{j=D}^{\infty} \left|B^a(z)\right|^{2j},
\end{multline}
where $B^a(z) := (z - a)/(1 - \overline{a}z)$ is a Blaschke factor as used in Eq.~\eqref{eq:bprod}. Assuming $z, a \in \D$ and using the fact that $B^a$ is a self-map on $\D$, we have
\begin{multline}
    \label{eq:almostSimple}
    \frac{1 - |a|^2}{|1 - \overline{a}z|^2} \sum_{j=D}^{\infty} \left|B^a(z)\right|^{2j} \\ = \frac{1 - |a|^2}{|1 - \overline{a}z|^2} \left|B^a(z)\right|^{2D} \cdot \frac{1}{1 - \left|B^a(z)\right|^2} \\ =
    \frac{1 - |a|^2}{|1 - \overline{a}z|^2 (1 - |B^a(z)|^2)} \cdot \left|B^a(z)\right|^{2D}.
\end{multline}
Substituting the identity (see, e.g.,~\cite[Lem.~2.2.1]{bprodbook})
$$
    1 - |B^a(z)|^2 = \frac{(1 - |z|^2)(1 - |a|^2)}{|1 - \overline{a}z|^2} \quad (a,z \in \D)
$$
into Eq.~\eqref{eq:almostSimple} yields
$$
    \sum_{j=D}^{\infty} \left|L_j^a(z)\right|^2 = \frac{|B^a(z)|^{2D}}{1 - |z|^2}.
$$
Since this holds for any $z \in \D$ we obtain
\begin{multline}
    \nonumber
    \left| \xi(z,t) - \xi_D^a(z, t) \right| \stackrel{\textrm{Remark~\ref{rem:errest} (a)}}{\leq}  \sup_{u \in \D} \frac{|B^a(u)|^{2D}}{1 - |u|^2} \\ =:
    \sup_{u \in \D} \eta^a_{D}(u).
\end{multline}
Next, we would like to obtain a uniform bound on $\eta^a_{D}$. Since $\lim_{|z| \to 1} \eta^a_{D}(z) = \infty$, we restrict our search to the domain
\begin{equation}
    \label{eq:reducedDisk}
    \D_\rho := \{ z \in \D_\rho \,:\, |z| \leq \rho < 1 \},
\end{equation}
where $0 < \rho < 1$ is fixed. That is, we choose the Laguerre parameter to satisfy $|a| < 1$ and look for a uniform bound (dependent only on $D$ and $a$) for $\eta^a_{D}$ over $\D_\rho$. We shall make use of the following identity.
\begin{lemma}
    \label{thm:BlaschEst}
    Given $a \in \D$ and $|z| \leq \rho < 1$, the Blaschke factor $B_a$ satisfies
    \begin{equation}
        |B^a(z)| = \frac{|z - a|}{|1 - \overline{a}z|} \leq \frac{|a| + \rho}{1 + |a| \rho}.
    \end{equation}
\end{lemma}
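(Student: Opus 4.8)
The plan is to reduce this two-variable problem to a one-dimensional optimization on the boundary circle $\{|z| = \rho\}$, and then exploit a monotonicity property. First I would observe that the Blaschke factor $B^a$ is holomorphic on an open neighborhood of the closed disk $\overline{\D_\rho}$: its only pole sits at $z = 1/\overline{a}$, which satisfies $|1/\overline{a}| = 1/|a| > 1 > \rho$ and hence lies outside $\overline{\D_\rho}$ (the case $a = 0$ being trivial, since then $B^a(z) = z$ and $|B^a(z)| = |z| \le \rho$). By the maximum modulus principle, $|B^a|$ attains its maximum over $\overline{\D_\rho}$ on the boundary $\{|z| = \rho\}$, so it suffices to bound $|B^a(z)|$ for $|z| = \rho$.

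Next I would expand the squared modulus on this circle. Writing $s := \operatorname{Re}(\overline{a}z)$ and using $|z|^2 = \rho^2$, a direct computation gives
\begin{equation*}
    |B^a(z)|^2 = \frac{|z - a|^2}{|1 - \overline{a}z|^2} = \frac{\rho^2 + |a|^2 - 2s}{1 + |a|^2 \rho^2 - 2s}.
\end{equation*}
The key step is to treat the right-hand side as a function of the single real parameter $s$ and determine its monotonicity. Differentiating, the sign of the derivative is governed by the difference of numerator and denominator, which equals $(1 - |a|^2)(\rho^2 - 1) = -(1-|a|^2)(1-\rho^2) < 0$, independently of $s$. Since the denominator $|1 - \overline{a}z|^2$ is strictly positive, $|B^a(z)|^2$ is strictly decreasing in $s$, so it is maximized when $s = \operatorname{Re}(\overline{a}z)$ is as small as possible.

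Finally, over the circle $|z| = \rho$ the product $\overline{a}z$ traces a circle of radius $|a|\rho$ about the origin, so $\operatorname{Re}(\overline{a}z)$ ranges in $[-|a|\rho, |a|\rho]$, attaining its minimum $-|a|\rho$ at the point $z = -\rho\, a/|a|$ diametrically opposite to $a$. Substituting $s = -|a|\rho$ collapses both numerator and denominator into perfect squares,
\begin{equation*}
    |B^a(z)|^2 \le \frac{\rho^2 + |a|^2 + 2|a|\rho}{1 + |a|^2\rho^2 + 2|a|\rho} = \frac{(|a| + \rho)^2}{(1 + |a|\rho)^2},
\end{equation*}
and taking square roots yields the claimed estimate. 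I expect the main (though mild) obstacle to be the monotonicity verification: one must confirm that the sign of the numerator--denominator difference is constant in $s$ and negative under the hypotheses $|a| < 1$ and $\rho < 1$, which is precisely what forces the extremal value to occur at the boundary point opposite to $a$ and produces the clean quotient of squares.
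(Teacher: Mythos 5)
Your proof is correct and follows essentially the same route as the paper's (the paper defers to \cite[Lem.~2.2.1]{bprodbook}, whose argument is the one you give): reduce to the circle $|z|=\rho$ by the maximum modulus principle, observe that $|B^a(z)|^2$ is a fractional-linear, strictly monotone function of the single real variable $s=\operatorname{Re}(\overline{a}z)=|a|\rho\cos(\theta-\alpha)$ because the numerator--denominator difference $-(1-|a|^2)(1-\rho^2)$ has fixed sign, and evaluate at the extremal point $s=-|a|\rho$, where both numerator and denominator become perfect squares. Your monotonicity computation and the verification that the denominator stays positive (it equals $(1-|a|\rho)^2$ at the extremum) are both sound, so nothing is missing.
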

The proof of Lemma~\ref{thm:BlaschEst} can be found in~\cite[Lem.~2.2.1]{bprodbook}.
This identity yields
\begin{multline}
    \label{eq:unifBound}
    |\xi(z,t) - \xi_D^a(z,t) |\leq \sup_{u \in \D_\rho} \eta_{D}^{a}(u) \\ = \left(\frac{|a| + \rho}{1 + |a| \rho}\right)^{2D} \frac{1}{1 - \rho^2} =: E_D(|a|). 
\end{multline}
The uniform bound in Eq.~\eqref{eq:unifBound} allows us to achieve the first of our objectives. Indeed, since $E_D$ depends on $|a|$, for a fixed $D$ we can obtain different uniform error bounds of the kernel approximation which depend on our choice of the orthonormal basis $\{ L_j^a \}_{j=0}^{\infty}$.

Finally, noticing that the function $E_D$ from Eq.~\eqref{eq:unifBound} is increasing on the interval $[0, \rho]$, we can use Eq.~\eqref{eq:unifBound} to choose a Laguerre system which minimizes the error estimate. Indeed, choosing $a = 0$ in this case minimizes the bound. We obtain
\begin{multline}
    \nonumber
     \left|\xi(z,t) - \xi_{D}^{0} (z,t) \right| = \left| \frac{1}{1 - \overline{t}z} - \sum_{j=0}^{D-1} L_j^{0}(z) \overline{L_j^0(t)} \right| \\\stackrel{\eqref{eq:dicLag}}{=} 
    \left| \frac{1}{1 - \overline{t}z} - \sum_{j=0}^{D-1} z^j \overline{t}^j \right|  \leq \frac{\rho^{2D}}{1 - \rho^2} \quad (z, t \in \D_\rho).
\end{multline}
This satisfies our second objective. Our calculations serve as an example of how to apply Theorem~\ref{thm:kernelErr} to obtain good quality kernel approximations. We note that in contrast to previous works (see, e.g.,~\cite{rahimi}), our error estimates are deterministic in nature and depend only on the properties of the considered RKHS and the parametrization $\Lambda \mapsto \cH(\Lambda)$.

This example also serves as motivation for our results in section~\ref{sec:adaptker}. So far, we have discussed adaptive kernel methods, where the RKHS $\cH(\Lambda) \ (\Lambda \in \cL)$ from Eq.~\eqref{eq:varproj} is chosen so that its kernel can be expressed as $\xi_D^{\Lambda}(x, t) = {\langle \Phi(x), \Phi(t) \rangle}_{\C^D} \ (x,t \in \cX)$ for some reasonable $D \in \N$ and $\xi_D^{\Lambda}(\cdot,\cdot) \approx \xi(\cdot,\cdot)$, where $\xi(\cdot,\cdot)$ is the reproducing kernel of a fixed RKHS $\cH$. This idea is also used by previous methods such as Random Fourier Features (RFF)~\cite{rahimi}. Unfortunately, this approach can be suboptimal when learning a map $F \in \cF$. To demonstrate this, consider 
\begin{equation}
    \label{eq:exampleF}
    F(z) = \frac{1}{1 - \overline{\lambda}z},
\end{equation}
where $z \in \overline{\D}$ and $\lambda \in \D \setminus \{ 0 \}$. Then, $F \in \myspan\{ L_0^{\lambda}\} =: \cH(\lambda) \subset H_2(\D) \ (\lambda \in \cL =: \D)$. That is, $F$ belongs to a one-dimensional subspace of $H_2(\mathbb{D})$ spanned by the first Laguerre function (see Eq.~\eqref{eq:dicLag}). Hence, if we choose $\Lambda := \lambda$ from  Eq.~\eqref{eq:exampleF} in the parametrization $\Lambda \mapsto \cH(\Lambda)$, the best approximating kernel model can completely recover $F$. Using the logic of RFF-like methods, we should approximate $F$ with a kernel model that belongs to a $D$-dimensional RKHS whose kernel approximates the $H_2(\D)$ kernel (see Eq.~\eqref{eq:CauchyKer}) well. For this example, choose $D=1$. By Eq.~\eqref{eq:unifBound}, choosing $\Lambda = 0$ provides the best uniform error bound between the kernels of the model space $\myspan\{ L_0^{0} \} =: \cH(0)$ and $H_2(\D)$. Denote the best approximation to $F$ in $\cH(0)$ by $f^{(0)}$ and in $\cH(\lambda)$ by $f^{(\lambda)}$, where $\lambda$ is the parameter from Eq.~\eqref{eq:exampleF}. Since $F \not\in \myspan\{ L_0^{0} \}$, we have $\big\| F - f^{(0)} \big\|_{H_2(\D)} > 0$. On the other hand, $\big\| F - f^{(\lambda)}\big\|_{H_2(\D)} = 0$. Thus, a kernel model from $\cH(\lambda)$ provides a better approximation to $F$ than the kernel model from $\cH(0)$ even though the kernel of the latter space provides a better approximation (or at least a tighter error bound) to Eq.~\eqref{eq:CauchyKer}, than the kernel of $\cH(\lambda)$. 

Using this observation, we deviate from the traditional RFF viewpoint of obtaining kernel models in low-dimensional RKHSs whose kernels provide good approximations to kernels of infinite-dimensional RKHSs. Instead, in section~\ref{sec:adaptker}, we propose kernel models $f \in \cH(\Lambda)$ that provide a \textit{good approximation to the map-to-be-learned} $F \in \cF$, regardless of the properties of the kernel that induces the low-dimensional RKHS $\cH(\Lambda)$.

\section{Adaptive kernel models in finite-dimensional RKHSs}
\label{sec:adaptker}

In this section, we consider the proposed adaptive kernel methods described by the variable projection operator in Eq.~\eqref{eq:varproj}. We define adaptive kernel methods whose image space consists of finite-dimensional RKHSs and discuss corresponding kernel models. First, we discuss some properties of these scalable (see Eq.~\eqref{eq:linKer}) adaptive methods, then we investigate the training problem in Eq.~\eqref{eq:optimProb}. 

\subsection{Finite-dimensional adaptive kernel methods}
\label{subsec:orthogonal}

We begin by formally defining finite-dimensional adaptive kernel methods and corresponding kernel models.

\begin{definition}[Finite-dimensional adaptive kernel methods/models]
\label{def:fdakm}
    Let $F \in \cF$, $\cS$ denote the set of all possible datasets $\{(t_k, y_k) \}_{k=1}^q \subset \cX \times \cY \ (q \in \N)$, and $\cE \subseteq \{ E : \left(\cX \times \cY \times \cY\right)^q \to [0, \infty) \}$ be the set of admissible error functionals. Let $S \in \cS$ and $E \in \cE$. Finally, let $\cL \subseteq \C^p$ be a nonempty parameter domain and suppose that for any $\Lambda \in \cL$ and fixed $D \in \N$, the parametrization $\Lambda \mapsto \{ \varphi_j^{\Lambda} \}_{j=0}^{D-1}$ is well-defined, where the $\cX \to \C$ maps $\{\varphi_j^{\Lambda}\}_{j=0}^{D-1}$ form a linearly independent system in a corresponding $D$-dimensional Hilbert space $\cH(\Lambda)$. We call the projection $\cP_{S, E}^{\Lambda} : \cF \to \cH(\Lambda)$ defined in \eqref{eq:varproj} and represented as
    \begin{multline}
        \label{eq:adaptiveKernelModels}
        \cP_{S, E}^{\Lambda} F := \sum_{k=1}^{q} c_k \left\langle \Phi^{\Lambda}(\cdot), \Phi^{\Lambda}(t_k) \right\rangle_{\C^{D}} \\ = \left\langle \Phi^{\Lambda}(\cdot), \overline{w} \right\rangle_{\C^{D}}  = \sum_{j=0}^{D-1} w_j \varphi_j^{\Lambda}(\cdot)
    \end{multline}
    a \emph{finite-dimensional adaptive kernel method} and the right-hand side a \emph{finite-dimensional adaptive kernel model}. In Eq.~\eqref{eq:adaptiveKernelModels}, the parameter-dependent feature map $\Phi^{\Lambda} : \cX \to \C^D$ is given by
    \begin{equation}\label{eq:featuremap}
        \Phi^{\Lambda}(\cdot) := \left[ \varphi_0^{\Lambda}(\cdot), \ldots, \varphi_{D-1}^{\Lambda}(\cdot) \right]^\top.
    \end{equation}
    
\end{definition}

\begin{remarks}
\label{rem:fdakm}
    \begin{enumerate}
        \item By the representer theorem (Theorem~\ref{thm:representer}, see also~\cite{representer}), the result of all kernel methods belongs to a $q$-dimensional RKHS. By Eq.~\eqref{eq:adaptiveKernelModels} however, the proposed adaptive kernel models belong to a $D$-dimensional RKHS (usually with $D \ll q$) defined by the parameterization $\Lambda \mapsto \{\varphi_j^{\Lambda} \}_{j=0}^{D-1} \ (\Lambda \in \cL)$. This makes the proposed methods usable with large datasets (see also subsection~\ref{subsec:benchmark}). 
        
        \item Previous methods improved the kernel model (with respect to $E$) by increasing $D$ (see, e.g.,~\cite{rahimi}). When training adaptive kernel methods however, we can modify $\Lambda$ to improve the quality of the models while retaining model complexity. 
    \end{enumerate}
\end{remarks}

Next, we discuss some properties of the proposed adaptive kernels inducing $D$-dimensional RKHSs.
\begin{theorem}[Adaptive kernels of finite dimensional RKHSs]
\label{thm:adaptLinKer}
    Let $\cL \subseteq \C^p$ be a parameter domain.  For $\Lambda\in \cL$ consider the parametrization $\Lambda \mapsto \{ \varphi_j^{\Lambda} \}_{j=0}^{D-1}$ and the kernel
    \begin{equation}
        \label{eq:adaptfinker}
        \xi_D^{\Lambda}(x, t) := {\langle \Phi^{\Lambda}(x), \Phi^{\Lambda}(t) \rangle}_{\C^D} \quad (x, t \in \cX),
    \end{equation}
    where the parameterization and the feature map $\Phi^{\Lambda}$ are defined according to Eq.~\eqref{eq:featuremap}. Assume that $\{\varphi_j^{\Lambda} \}_{j=0}^{D-1}$ are linearly independent and span the $D$-dimensional RKHS $\cH(\Lambda)$. Then $\xi_D^{\Lambda}(\cdot,\cdot)$ is a Hermitian, positive definite reproducing kernel in $\cH(\Lambda)$. 
\end{theorem}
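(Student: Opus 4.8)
The plan is to verify, one at a time, the three defining properties of a reproducing kernel collected in Theorem~\ref{thm:kerProps}: Hermitian symmetry, positivity, and the reproducing property. Since $\xi_D^{\Lambda}$ is by Eq.~\eqref{eq:adaptfinker} nothing but a Euclidean inner product of two feature vectors, the first two properties will fall out of the structure of ${\langle \cdot, \cdot \rangle}_{\C^D}$ almost immediately; the real content sits in the reproducing property, which forces me to first pin down the inner product carried by $\cH(\Lambda)$.

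For Hermitian symmetry I would just use conjugate symmetry of the Euclidean inner product, obtaining
\[
\xi_D^{\Lambda}(x,t) = {\langle \Phi^{\Lambda}(x), \Phi^{\Lambda}(t) \rangle}_{\C^D} = \overline{{\langle \Phi^{\Lambda}(t), \Phi^{\Lambda}(x) \rangle}_{\C^D}} = \overline{\xi_D^{\Lambda}(t,x)}.
\]
For positivity, given nodes $\{x_k\}_{k=1}^q \subset \cX$ and coefficients $\{c_k\}_{k=1}^q \subset \C$, I would recognize the quadratic form as a squared norm in the feature space,
\[
\sum_{k,k'=1}^{q} c_k \overline{c_{k'}}\, \xi_D^{\Lambda}(x_k, x_{k'}) = \left\| \sum_{k=1}^{q} c_k\, \Phi^{\Lambda}(x_k) \right\|_{\C^D}^2 \ge 0 .
\]
Here I anticipate the one delicate point: because $\cH(\Lambda)$ is $D$-dimensional, the vectors $\Phi^{\Lambda}(x_1), \ldots, \Phi^{\Lambda}(x_q)$ must be linearly dependent as soon as $q > D$, so strict positivity genuinely fails in that regime. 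I would therefore read ``positive definite'' in the sense that every Gram form is positive semidefinite, which is the natural convention for kernels of finite-dimensional spaces, and remark that strictness holds exactly when the sampled feature vectors are linearly independent.

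The core step is the reproducing property, for which I must equip $\cH(\Lambda)$ with a Hilbert space structure. Since $\{\varphi_j^{\Lambda}\}_{j=0}^{D-1}$ is assumed linearly independent, every $f \in \cH(\Lambda) = \myspan\{\varphi_0^{\Lambda}, \ldots, \varphi_{D-1}^{\Lambda}\}$ has unique coordinates $w = [w_0, \ldots, w_{D-1}]^{\top} \in \C^D$ with $f = \sum_{j=0}^{D-1} w_j \varphi_j^{\Lambda}$, so I may transport ${\langle \cdot, \cdot \rangle}_{\C^D}$ through this coordinate isomorphism and set
\[
{\left\langle \sum_{j=0}^{D-1} w_j \varphi_j^{\Lambda},\; \sum_{j=0}^{D-1} d_j \varphi_j^{\Lambda} \right\rangle}_{\cH(\Lambda)} := \sum_{j=0}^{D-1} w_j \overline{d_j}.
\]
This is well-defined precisely because of linear independence, and it makes $\{\varphi_j^{\Lambda}\}_{j=0}^{D-1}$ an orthonormal basis of $\cH(\Lambda)$. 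Being finite-dimensional, $\cH(\Lambda)$ has bounded evaluation functionals and is thus an RKHS in the sense of Definition~\ref{def:rkhs}. Reading Eq.~\eqref{eq:adaptfinker} as a function of its first slot identifies the kernel slice as $\xi_D^{\Lambda}(\cdot,t) = \sum_{j=0}^{D-1} \overline{\varphi_j^{\Lambda}(t)}\, \varphi_j^{\Lambda} \in \cH(\Lambda)$, whose coordinate vector is $(\overline{\varphi_j^{\Lambda}(t)})_j$; then for any $f = \sum_j w_j \varphi_j^{\Lambda}$ orthonormality yields
\[
{\langle f, \xi_D^{\Lambda}(\cdot, t) \rangle}_{\cH(\Lambda)} = \sum_{j=0}^{D-1} w_j \varphi_j^{\Lambda}(t) = f(t),
\]
which is exactly the reproducing property. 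Since Riesz's theorem guarantees uniqueness of the reproducing kernel of an RKHS, $\xi_D^{\Lambda}$ is then \emph{the} reproducing kernel of $\cH(\Lambda)$.

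I expect the main conceptual obstacle to be this last identification, not any calculation: the theorem only posits linear independence of the $\varphi_j^{\Lambda}$, not orthonormality, so one must observe that $\cH(\Lambda)$ has to be endowed with the specific inner product rendering them orthonormal in order for $\xi_D^{\Lambda}$ — which is assembled from the \emph{Euclidean} inner product on the feature space $\C^D$ — to reproduce. An equivalent but more abstract route is to invoke the feature-map construction of Definition~\ref{eq:featSpace}: the pair $(\Phi^{\Lambda}, \C^D) \in \fG_{\xi_D^{\Lambda}}$ certifies positivity, and Aronszajn's theorem then produces a unique RKHS whose kernel is $\xi_D^{\Lambda}$, which one identifies with $\cH(\Lambda)$. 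I prefer the explicit orthonormal-basis argument because it exhibits the inner product directly and makes the reproducing computation transparent.
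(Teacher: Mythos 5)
Your proof is correct and follows essentially the same route as the paper's: the paper likewise endows $\cH(\Lambda)$ with the inner product transported from the coordinate vectors, setting ${\langle g_{w_1}, g_{w_2} \rangle}_{\cH} := {\langle w_1, w_2 \rangle}_{\C^D}$ (exactly your orthonormalizing inner product), identifies the kernel slice as $\xi_D^{\Lambda}(\cdot,t) = \Phi^{\Lambda}(t)^{\ast}\Phi^{\Lambda}(\cdot)$, and verifies the reproducing property by the same one-line computation, treating symmetry and positivity as immediate from Eq.~\eqref{eq:adaptfinker}. Your added caveat that strict positive definiteness necessarily fails once $q > D$ (so that ``positive definite'' must be read in the Gram/semidefinite sense, with strictness only when the sampled feature vectors are linearly independent) is a correct refinement that the paper's terse ``follows immediately'' glosses over.
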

\begin{proof}
    Hermitian symmetry and positive definiteness follow immediately from Eq.~\eqref{eq:adaptfinker}. We show the reproducing property. First, define $\cH(\Lambda)$ as
    \begin{equation}
        \label{eq:HLambda}
        \cH(\Lambda) := \left\{ g_w \in \cF \,: \, g_w(\cdot) := \overline{w}^{\ast} \Phi^{\Lambda}(\cdot) = \sum_{j=0}^{D-1} w_k \varphi_j^{\Lambda}(\cdot) \right\}.
    \end{equation}
    The inner product corresponding to $\cH(\Lambda)$ is given as ${\langle g_{w_1}, g_{w_2} \rangle}_{\cH} := {\langle w_1, w_2 \rangle}_{\C^{D}} \ (w_1, w_2 \in \C^D)$. Since we have $\xi_D^{\Lambda}(\cdot, t) = {\langle \Phi^{\Lambda}(\cdot), \Phi^{\Lambda}(t)  \rangle}_{\C^D} = \Phi^{\Lambda}(t)^{\ast} \Phi^{\Lambda}(\cdot)$, we immediately obtain the reproducing property, i.e.,
    \begin{multline*}
        {\langle g_w, \xi_D^{\Lambda}(\cdot, t) \rangle}_{\cH}  = \left\langle w,\overline{\Phi^{\Lambda}(t)}\right\rangle_{\C^D} \\ = \left\langle \Phi^{\Lambda}(t), \overline{w}\right\rangle_{\C^D} = \overline{w}^{\ast} \Phi^{\Lambda}(t) \stackrel{\eqref{eq:HLambda}}{=} g_w(t).
    \end{multline*}
\end{proof}
\begin{remarks}
\label{rem:adaptOrt}
    \begin{enumerate}
        \item Suppose $F \in L_2(\cX)$ and that for any choice of $\Lambda \in \cL$, we obtain an orthonormal family $\{ \varphi_j^{\Lambda} \}_{j=0}^{D-1}$ (with respect to the $L_2(\cX)$ inner product). Then, the induced $D$-dimensional Hilbert space $\cH(\Lambda)$ will inherit the inner product of $L_2(\cX)$. 
        \item There are many well-known parameterized (often orthonormal) function families $\{ \varphi_j^{\Lambda} \}_{j=0}^{D-1}$ which can be used to construct reproducing kernels of the form presented in Theorem~\ref{thm:adaptLinKer}. Examples include the TM functions (see Eq.~\eqref{eq:mtsys} and~\cite{bprodbook}) for the RKHS $H_2(\D)$ and its finite-dimensional subspaces. 
        Other examples include kernels constructed from adaptive and weighted adaptive Hermite functions~\cite{weightedHerm}. These induce RKHSs which are finite-dimensional subspaces of the Hilbert space $L_2\big(\R^N\big)$ (which is not an RKHS itself).
        \item If there is some a priori information about $F$, it can be used to choose an appropriate family $\{ \varphi_j^{\Lambda} \}_{j=0}^{D-1}$. In such cases, the proposed adaptive kernel models can contain physically meaningful information in the parameter $\Lambda$.
        In this regard, the proposed adaptive kernel methods can be used to build physics-informed ML models (see, e.g.,~\cite{dovsilovic2018explainable}).
    \end{enumerate}
\end{remarks}

\subsection{Training finite-dimensional adaptive kernel models}
\label{subsec:GVP}

In this section, we consider the problem of finding the best finite-dimensional adaptive kernel model (see Definition~\ref{def:fdakm}) for a given map-to-be-learned $F \in \cF$, dataset $S \in \cS$ and loss function $E \in \cE$. We uphold all assumptions of Definition~\ref{def:fdakm} and Theorem~\ref{thm:adaptLinKer}. The training problem for a general adaptive kernel model is motivated by Eq.~\eqref{eq:optimProb} and is given by
\begin{equation}
 \label{eq:iteredmin}
 \min_{\Lambda \in \cL} \min_{g \in \cH(\Lambda)} E((t_1, g(t_1), y_1), \ldots, (t_q, g(t_q), y_q)),
\end{equation}
where $y_k = F(t_k)$ $(k=1,\ldots,q)$.
Considering that an optimal kernel model $f^{\Lambda^\ast}$ is an adaptive kernel model from Eq.~\eqref{eq:adaptiveKernelModels}, it belongs to a $D$-dimensional RKHS. Furthermore it can be written as
$$
    f^{\Lambda^\ast} = \sum_{j=0}^{D-1} w_j \varphi_j^{\Lambda^\ast}.
$$
The existence of such a minimizer is guaranteed by the following.
\begin{theorem}[Existence of a minimizer]
    \label{thm:exist}
    Let $\cL \subset \C^p$ be a  nonempty compact set and let $\{ (t_k, y_k) \}_{k=1}^{q} \subset \cX \times \cY$, where $\cX \subseteq \C^N$ and $\cY \subseteq \C$. Let $D \in \N$ and suppose that for any $\Lambda \in \cL$, $\Lambda \mapsto \myspan\{\varphi_0^{\Lambda}, \varphi_1^{\Lambda}, \ldots, \varphi_{D-1}^{\Lambda} \} =: \cH(\Lambda)$ is a $D$-dimensional RKHS of functions $\cX \to \cY$. Moreover, suppose that the maps $\Lambda \mapsto \varphi_j^{\Lambda}$ are continuous (with respect to the norm induced by $\cH(\Lambda)$ in the co-domain) for $j=0,\ldots,D-1$.
    Let $w:=\left[ w_0, \ldots, w_{D-1} \right]^\top\in \C^D$ and
    \begin{equation*}
     f^{\Lambda}(w;\cdot) := \sum_{j=0}^{D-1} w_j \varphi_j^{\Lambda}(\cdot) \quad (\Lambda \in \cL).
    \end{equation*}
    Suppose that the error functional $E$ in \eqref{eq:iteredmin} is continuous with respect to each of the second variables of each triple $(t_k,g(t_k),y_k)$ $(k=1,\ldots,q)$.
    Finally, assume that there exists some  large enough $M > 0$, such that the continuous compact-valued correspondence $C : \cL \rightrightarrows \C^D$, where
    \begin{equation}
        \label{eq:sublev}
        C(\Lambda) := \{ w \in \C^D \,:\, \tE(\Lambda, w) \leq M \}
    \end{equation}
    is non-empty for all $\Lambda \in \cL$, where 
    \begin{equation}
        \label{eq:jointE}
        \tE(\Lambda, w) := E((t_1, f^{\Lambda}(w; t_1), y_1), \ldots, (t_q, f^{\Lambda}(w; t_q), y_q).
    \end{equation}
    
    Then, the optimization problem in Eq.~\eqref{eq:iteredmin} has a solution.
\end{theorem}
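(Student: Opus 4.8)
The plan is to recast \eqref{eq:iteredmin} as a finite-dimensional optimization and then combine Berge's Maximum Theorem with the Weierstrass extreme value theorem. First I would eliminate the inner functional minimization. Since $\{\varphi_j^{\Lambda}\}_{j=0}^{D-1}$ is linearly independent and spans $\cH(\Lambda)$, the coordinate map $w \mapsto f^{\Lambda}(w;\cdot)$ is a bijection from $\C^D$ onto $\cH(\Lambda)$; hence $\min_{g \in \cH(\Lambda)} E(\cdots) = \min_{w \in \C^D} \tE(\Lambda, w)$ with $\tE$ as in \eqref{eq:jointE}, and the problem \eqref{eq:iteredmin} becomes $\min_{\Lambda \in \cL} \min_{w \in \C^D} \tE(\Lambda, w)$.

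The key preliminary step is to establish that $\tE$ is jointly continuous on $\cL \times \C^D$. For each fixed data site $t_k$, the map $(\Lambda, w) \mapsto f^{\Lambda}(w; t_k) = \sum_{j=0}^{D-1} w_j \varphi_j^{\Lambda}(t_k)$ is polynomial in $w$, so it suffices to show that $\Lambda \mapsto \varphi_j^{\Lambda}(t_k)$ is continuous. The hypothesis supplies continuity of $\Lambda \mapsto \varphi_j^{\Lambda}$ in the $\cH(\Lambda)$-norm; to convert this into pointwise continuity at $t_k$ I would invoke the reproducing-kernel pointwise convergence property (Theorem~\ref{thm:pointwise}), which guarantees that norm convergence entails convergence of point evaluations. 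Composing these continuous evaluation maps with $E$, which by assumption is continuous in each of its second arguments, yields the joint continuity of $\tE$.

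Next I would handle the inner minimum and the value function. For fixed $\Lambda$, nonemptiness of $C(\Lambda)$ in \eqref{eq:sublev} gives some $w$ with $\tE(\Lambda, w) \leq M$, so $\inf_{w \in \C^D}\tE(\Lambda, w) \leq M$; every $w \notin C(\Lambda)$ satisfies $\tE(\Lambda, w) > M$, whence the infimum is attained on the compact sublevel set $C(\Lambda)$. Define the value function $V(\Lambda) := \min_{w \in C(\Lambda)} \tE(\Lambda, w)$. Because $\tE$ is jointly continuous and $C$ is, by hypothesis, a continuous compact-valued correspondence, Berge's Maximum Theorem applies: $V$ is continuous on $\cL$ and the associated argmin correspondence $\Lambda \mapsto \argmin_{w \in C(\Lambda)} \tE(\Lambda, w)$ is nonempty and compact-valued.

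Finally I would conclude by compactness of $\cL$. Since $V$ is continuous on the nonempty compact set $\cL$, the Weierstrass theorem provides a minimizer $\Lambda^{\ast} \in \cL$ of $V$; choosing any $w^{\ast}$ in the nonempty set $\argmin_{w \in C(\Lambda^{\ast})} \tE(\Lambda^{\ast}, w)$ produces a pair $(\Lambda^{\ast}, w^{\ast})$ that solves $\min_{\Lambda, w} \tE(\Lambda, w)$, i.e., $f^{\Lambda^{\ast}}(w^{\ast}; \cdot)$ solves \eqref{eq:iteredmin}. I expect the second step to be the main obstacle: translating the abstract norm-continuity of $\Lambda \mapsto \varphi_j^{\Lambda}$ into continuity of the point evaluations requires care, since the space $\cH(\Lambda)$ and hence the norm of its evaluation functionals vary with $\Lambda$; compactness of $\cL$ should furnish the uniform control needed to make Theorem~\ref{thm:pointwise} applicable along convergent sequences of parameters.
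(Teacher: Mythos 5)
Your proposal follows essentially the same route as the paper's proof: establish joint continuity of $\tE$, obtain the inner minimum via compactness of the sublevel sets $C(\Lambda)$ and the extreme value theorem, apply Berge's maximum theorem to get continuity of the value function, and finish with Weierstrass on the compact set $\cL$. If anything, you are more careful at two points the paper glosses over---converting the norm-continuity of $\Lambda \mapsto \varphi_j^{\Lambda}$ into continuity of the point evaluations $\Lambda \mapsto \varphi_j^{\Lambda}(t_k)$ (the paper simply asserts continuity of $\tE$ without addressing that the norm, and hence the evaluation functionals, vary with $\Lambda$), and feeding the hypothesized continuous compact-valued correspondence $C(\Lambda)$, rather than the argmin sets $C^{\ast}(\Lambda)$, into Berge's theorem, which is the formally correct way to invoke it.
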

\begin{proof}
    Notice that since $\Lambda \mapsto \varphi_j^{\Lambda}$ is continuous for $j=0,\ldots,D-1$, the map
    $$
        (\Lambda,w) \mapsto  f^{\Lambda}(w;\cdot) 
    $$
    is continuous. Hence, $\tE$ in Eq.~\eqref{eq:jointE} is also continuous. Because of this and since $C(\Lambda)$ is non-empty and compact for each $\Lambda \in \cL$,
    the extreme value theorem implies that $\tE(\Lambda, \cdot)$ attains a minimum for fixed $\Lambda \in \cL$. Because of this, the sets
    \begin{multline}
        \label{eq:argminSets}
        C^{\ast} (\Lambda) := \argmin_{w \in \C^D} \tE(\Lambda, \cdot) \\ = \left\{ w^* \in \C^D : \tE(\Lambda, w^*) \le \tE(\Lambda, w) \text{ for all } w \in \C^D \right\}
    \end{multline}
    are nonempty and compact for each $\Lambda \in \cL$. Define the function
    \begin{equation}
        \label{eq:cE}
        \cE(\Lambda) := \min_{w \in \C^D} \tE(\Lambda, w).
    \end{equation}
    Due to the continuity of $\tE$ in both variables and the compactness of the sets $C^{\ast}(\Lambda)$ from Eq.~\eqref{eq:argminSets}, we can apply Berge's maximum theorem~\cite[Chap.~E, Sect.~3]{ok2007real}. As a consequence, $\cE$ from Eq.~\eqref{eq:cE} is continuous. However, since $\cL$ is compact, it necessarily attains a minimum.
\end{proof}
\begin{remarks}
    \begin{enumerate}
        \item Uniqueness of the solution can be guaranteed by making strict convexity assumptions for $E$ or adding an appropriate regularization term. 
        \item For many concrete choices of the parameterized family $\{ \varphi_j^{\Lambda} \}_{j=0}^{D-1}$, we cannot assume the compactness of $\cL$. Nevertheless, a solution can still exist. We refer to~\cite{weightedHerm} for such an example, where $\cL$ is not compact, $\{\varphi_j^{\Lambda}\}_{j=0}^{D-1}$ are chosen as weighted Hermite functions, and $E$ is chosen as the least-squares error.
        \item The intuition behind the assumption on the continuity of $C : \cL \rightrightarrows \C^D$ is that the parametrization should be such that the sub-level sets of $\tE$ in Eq.~\eqref{eq:jointE} do not change  instantaneously as $\Lambda$ changes.
        \item The continuity of the correspondence  $C : \cL \rightrightarrows \C^D$ can be challenging to verify. However, since in practical cases we have $\cL \subset \C^p$ and $C$ is a correspondence between metric spaces. Then, by~\cite[Chap.~2, Prop.~ 5]{ok2007real}, the continuity of $C$ in Eq.~\eqref{eq:sublev} is equivalent to its continuity in the Hausdorff metric, which can be easier to verify. 
    \end{enumerate}
    \label{rem:existremarks}
\end{remarks}

\subsection{Connection to Random Fourier Features methods}
\label{subsec:prevMeth}


We proceed to show, how a popular kernel method known as Random Fourier Features (RFF) proposed by Rahimi and Recht in~\cite{rahimi} can be interpreted as a special case of the adaptive reproducing kernel models proposed in Eq.~\eqref{eq:adaptiveKernelModels}.

We begin with a short summary of the RFF method. Consider an infinite-dimensional RKHS $\cH$ of functions $\cX \to \cY$ ($\cX \subseteq \R^N, \cY \subseteq \R$) with a reproducing kernel $\xi : \cX \times \cX \to \cY$. If $\xi(\cdot,\cdot)$ is also continuous and shift invariant, then Bochner's theorem~\cite{rudin2017fourier} can be applied to it.
\begin{theorem}[Bochner's theorem]
    A continuous kernel $\xi(x, t) := \zeta(x-t) \ (x,t \in \R^N)$ is positive definite if and only if $\zeta$ is the Fourier transform of a positive finite Borel measure.
    
\end{theorem}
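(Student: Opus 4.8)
The plan is to treat the two implications separately, since the statement is an equivalence. The sufficiency direction is elementary. Assume $\zeta(s) = \int_{\R^N}\eu^{\iu\langle\omega,s\rangle}\,\du\mu(\omega)$ for a positive finite Borel measure $\mu$. For arbitrary $x_1,\ldots,x_q \in \R^N$ and $c_1,\ldots,c_q \in \C$ I would substitute this representation into $\sum_{k,k'} c_k\overline{c_{k'}}\,\zeta(x_k-x_{k'})$ and interchange the finite sum with the integral (legitimate because $\mu$ is finite), obtaining $\int_{\R^N}\bigl|\sum_k c_k \eu^{\iu\langle\omega,x_k\rangle}\bigr|^2\,\du\mu(\omega)\ge 0$, which is the positive definiteness asserted in Theorem~\ref{thm:kerProps}~b); continuity of $\zeta$ follows from dominated convergence.

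The necessity direction is the substantial one. After recording the elementary consequences $\zeta(-s)=\overline{\zeta(s)}$ and $|\zeta(s)|\le\zeta(0)$ (so $\zeta$ is bounded), the key step is to upgrade finite positive definiteness to its continuous form: via continuity of $\zeta$ and a Riemann-sum approximation, $\int\!\int \zeta(x-y)\,\overline{u(x)}\,u(y)\,\du x\,\du y \ge 0$ for every Schwartz function $u$. Applying this with the Gaussian-modulated plane wave $u(x)=\eu^{\iu\langle\omega,x\rangle-\lambda|x|^2}$ and passing to centre-of-mass and difference coordinates, the centre variable integrates out to a positive constant and leaves, up to that constant and a harmless rescaling of the Gaussian width, exactly the integral defining
\[
  f_\lambda(\omega):=(2\pi)^{-N}\int_{\R^N}\zeta(s)\,\eu^{-\iu\langle\omega,s\rangle}\,\eu^{-\lambda|s|^2}\,\du s;
\]
hence $f_\lambda(\omega)\ge 0$ for every $\lambda>0$.

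It remains to assemble these regularisations into a single measure. Since $\zeta(s)\eu^{-\lambda|s|^2}$ is smooth and integrable, Fourier inversion gives $\zeta(s)\eu^{-\lambda|s|^2}=\int_{\R^N} f_\lambda(\omega)\,\eu^{\iu\langle s,\omega\rangle}\,\du\omega$; evaluating at $s=0$ shows $\int f_\lambda\,\du\omega=\zeta(0)$, so the positive measures $\du\mu_\lambda:=f_\lambda\,\du\omega$ have constant total mass $\zeta(0)$. I would then extract a weak-$*$ convergent net $\mu_\lambda\to\mu$ as $\lambda\to 0$ and let $\lambda\to 0$ in the inversion identity to conclude $\zeta(s)=\int_{\R^N}\eu^{\iu\langle s,\omega\rangle}\,\du\mu(\omega)$.

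The main obstacle is precisely this passage to the limit. Weak-$*$ compactness of bounded positive measures only tests against functions vanishing at infinity, whereas $\omega\mapsto\eu^{\iu\langle s,\omega\rangle}$ does not, so I must exclude mass escaping to infinity. I would establish \emph{tightness} of $\{\mu_\lambda\}$ from the continuity of $\zeta$ at the origin, using the identity $\int_{\R^N}(1-\cos\langle s,\omega\rangle)\,\du\mu_\lambda(\omega)=\zeta(0)-\operatorname{Re}\bigl(\zeta(s)\eu^{-\lambda|s|^2}\bigr)$: averaging over small $s$ and noting that $1-\cos\langle s,\omega\rangle$ is bounded below on large $\omega$-balls bounds $\mu_\lambda(\{|\omega|>R\})$ uniformly in $\lambda$ by the modulus of continuity of $\zeta$ near $0$. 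Tightness upgrades weak-$*$ convergence to convergence against bounded continuous functions, which justifies the final limit and yields $\zeta=\hat\mu$.
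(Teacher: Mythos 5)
The paper does not prove Bochner's theorem: it is quoted as a classical result with a citation to Rudin's text on Fourier analysis, so there is no internal proof to compare yours against. Judged on its own, your proposal is the standard Gaussian-regularization proof of the classical theorem, and its architecture is sound: the sufficiency computation is the usual interchange of the finite quadratic form with the integral; for necessity, the passage to the integrated inequality via Riemann sums, the choice $u(x)=\eu^{\iu\langle\omega,x\rangle-\lambda|x|^2}$ with centre-of-mass/difference coordinates to obtain $f_\lambda\ge 0$, the constant total mass $\int_{\R^N} f_\lambda\,\du\omega=\zeta(0)$, and the L\'evy-type tightness argument obtained by averaging $\int_{\R^N}(1-\cos\langle s,\omega\rangle)\,\du\mu_\lambda(\omega)$ over small $s$ are all correct. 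You also rightly identify tightness as the crux: it is exactly what permits testing the weak-$*$ limit against the non-decaying characters $\omega\mapsto\eu^{\iu\langle s,\omega\rangle}$, a point many sketches miss.

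Two points need repair, one mathematical and one of bookkeeping. First, $g_\lambda(s):=\zeta(s)\eu^{-\lambda|s|^2}$ is only continuous (not smooth, since $\zeta$ is merely continuous), and pointwise Fourier inversion is not automatic from $g_\lambda\in L_1$: you must first establish $f_\lambda\in L_1(\R^N)$. This follows from the positivity you already proved: convolving with a Gaussian approximate identity gives $\int_{\R^N} f_\lambda(\omega)\,\eu^{-\epsilon|\omega|^2}\,\du\omega\to g_\lambda(0)=\zeta(0)$ as $\epsilon\to 0$, so monotone convergence yields $f_\lambda\in L_1$ with $\int_{\R^N} f_\lambda\,\du\omega=\zeta(0)$, and then dominated convergence gives the inversion identity at every $s$; insert this standard lemma before evaluating at $s=0$, since your later steps (constant mass, the cosine identity underpinning tightness) all rest on that identity. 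Second, a mismatch with the paper's conventions rather than an error of yours: Theorem~\ref{thm:kerProps}~b) defines positive definiteness strictly (equality only when $c_1=\cdots=c_q=0$), whereas Bochner's theorem is an equivalence only for positive \emph{semi}definiteness --- for instance $\mu=\delta_0$ gives $\zeta\equiv 1$, whose quadratic form vanishes at $c=(1,-1)$ with distinct nodes. Your sufficiency step correctly delivers only the inequality $\ge 0$, so when you invoke Theorem~\ref{thm:kerProps}~b) you should drop the strictness clause; with these two adjustments the argument is complete.
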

 This theorem is exploited in~\cite{rahimi} to obtain unbiased estimator of $\xi(\cdot,\cdot)$. More precisely, if $\zeta$ is properly scaled, its Fourier transform is a probability distribution, i.e., there exists a probability density function $p:\R^N \to [0,\infty)$ such that 
\begin{align}
    \label{eq:RFFker}
    \begin{split}
    \xi(x, t) &= \zeta(x-t) \\ &= \int_{\R^N} p(\lambda) \eu^{\iu \langle \lambda, x-t \rangle_{\R^N} } \,\du \lambda \\ &\approx  \sum_{j=0}^{D-1} \eu^{\iu \langle \lambda_j, x \rangle_{\R^N}} \eu^{-\iu \langle \lambda_j, t \rangle_{\R^N}} =: \xi_D^{\Lambda}(x, t)
\end{split}
\end{align}
for all $x,t \in \R^N$ and some parameters $\lambda_j \in \R^N$ $(j=0,\ldots,D-1)$.

When using the RFF modeling scheme, the parameter 
\begin{equation}
    \label{eq:RFFpars}
    \Lambda := [\lambda_0, \lambda_1, \ldots, \lambda_{D-1}] \in \cL := \R^{N \times D}
\end{equation}
is randomly selected according to the probability distribution $p$. Note that the right-hand side of Eq.~\eqref{eq:RFFker} coincides with the truncated kernel definition in Eq.~\eqref{eq:adaptfinker}. Indeed, provided that $\lambda_j \neq \lambda_n \ (j \neq n)$, the trigonometric basis functions from Eq.~\eqref{eq:RFFker} given by
\begin{equation}
    \label{eq:trigsys}
    \varphi_j^{\Lambda}
    := \eu^{\iu \langle \lambda_j, \cdot \rangle_{\R^N}} \quad (j=0,\ldots,D-1)
\end{equation}
span a $D$-dimensional subspace of the ambient Hilbert space $L_2(\cX)$, if $\cX \subset \R^N$ is compact. This subspace is an RKHS and the RFF model can be interpreted as a special case of Eq.~\eqref{eq:adaptiveKernelModels} with the feature transformation
$$
\Phi^{\Lambda}(\cdot) := \left[\eu^{\iu \langle \lambda_0, \cdot \rangle_{\R^N}}, \ldots, \eu^{\iu \langle \lambda_{D-1}, \cdot \rangle_{\R^N}}\right]^\top.
$$
Indeed,  using the kernel approximation $\xi_D^{\Lambda}$ from Eq.~\eqref{eq:RFFker}, we can define the RFF kernel model
\begin{align}
\label{eq:RFFmodel}
\begin{split}
    f^{\Lambda} &:= \sum_{k=1}^{q} c_k \xi_D^{\Lambda}(\cdot, t_k) \\ &= \sum_{k=1}^{q} c_k \sum_{j=0}^{D-1} \eu^{\iu \langle \lambda_j, \cdot \rangle_{\R^N}} \eu^{-\iu \langle \lambda_j, t_k \rangle_{\R^N}} \\ &= 
    \sum_{j=0}^{D-1} \eu^{\iu \langle \lambda_j, \cdot \rangle_{\R^N}} \sum_{k=1}^q \overline{c_k \eu^{\iu \langle \lambda_j, t_k \rangle_{\R^N}}} = \sum_{j=0}^{D-1} w_j \eu^{\iu \langle \lambda_j, \cdot \rangle_{\R^N}}.
 \end{split}
\end{align}
Thus, the RFF algorithm can be understood as a special case of the methods proposed here. In terms of our proposed framework, when using the RFF method, the nonlinear kernel parameters $\Lambda$ from Eq.~\eqref{eq:RFFpars} are optimized using a Monte Carlo approach by exploiting Bochner's theorem. In this way, probabilistic guarantees can be obtained for how well $\xi_D^{\Lambda}(\cdot,\cdot)$ approximates the kernel $\xi(\cdot,\cdot)$ of the ambient RKHS~\cite{rahimi}. Here we argue (and show through experiments in section~\ref{sec:exp}), that despite these guarantees on the quality of this approximation, the parameters $\Lambda$ obtained this way are suboptimal and usually do not solve the optimization problem in  Eq.~\eqref{eq:optimProb}. In fact, applying only a few iterations of a gradient-based optimization method to refine $\Lambda$ obtained by the RFF method could significantly improve the quality of the kernel model, when using the model from Eq.~\eqref{eq:RFFmodel} as an initial approximation. Additionally, in all our experiments, see section \ref{sec:exp}, the computational overhead due to the inclusion of $\Lambda$ as trainable parameters never causes a significant increase in training time. In fact, because we are looking for the solution in a family of RKHSs $\cH (\Lambda)$ instead of a single fixed one, stopping  criteria for training are usually met in fewer iterations.


Finally, assuming the map $\Lambda \mapsto \{\varphi_j^{\Lambda}\}_{j=0}^{D-1}$ along with the error functional $E$ are differentiable with respect to $\Lambda$,
Algorithm \ref{alg:adaptiveKernelGradient} is a prototype algorithm that illustrates how the gradient descent method can be employed for adaptive kernel model training.

\begin{algorithm}[t]
\caption{Gradient descent training of an adaptive kernel model}
\label{alg:adaptiveKernelGradient}
\begin{algorithmic}[1]
\Require Data $\{(t_k,y_k)\}_{k=1}^q$; order $D\in\mathbb{N}$;
mapping $\Lambda \mapsto \big\{\varphi_j^{\Lambda}\big\}_{j=0}^{D-1}$ (lin. indep., differentiable in $\Lambda$);
error functional $\tE$ as in Eq.~\eqref{eq:jointE};
step sizes $\eta_{\Lambda},\eta_w>0$; tolerance $\varepsilon>0$; max. iterations $I_{\max}\in\mathbb{N}$.
\Ensure Optimal parameters $(\Lambda^\star,w^\star)$.
\State  Choose initial parameters $\Lambda^{(0)}\in\mathcal{L}$, $w^{(0)}\in\mathbb{C}^D$.
\State $i \gets 0$.

\While{$i < I_{\max}$}
    \State  Build $\{\varphi_j^{\Lambda^{(i)}}\}_{j=0}^{D-1}$.
    \Comment{Optionally orthonormalize.}
    \State  Form the kernel model $ \cP_{S, E}^{\Lambda} F$. \Comment{Eq.~\eqref{eq:adaptiveKernelModels}.}
    \State Compute
    \[
        g_{\Lambda}^{(i)} \gets \nabla_{\Lambda} \tE\left(\Lambda^{(i)},w^{(i)}\right), \quad g_w^{(i)} \gets \nabla_w  \tE\left(\Lambda^{(i)},w^{(i)}\right).
    \]
    \State Perform a gradient step
    \[ 
        \Lambda^{(i+1)} \gets \Lambda^{(i)} - \eta_{\Lambda}\, g_{\Lambda}^{(i)}, \quad w^{(i+1)} \gets w^{(i)} - \eta_w\, g_w^{(i)}.
    \]

    \If{$\big\|g_{\Lambda}^{(i)}\big\| + \big\|g_w^{(i)}\big\| \le \varepsilon$}
        \State $i \gets i+1$.
        \State \textbf{break}
    \EndIf
    
    \State $i \gets i+1$.
\EndWhile

\State \Return $(\Lambda^\star,w^\star) \gets \left(\Lambda^{(i)},w^{(i)}\right)$.\Comment{Approx. sol. of Eq~\eqref{eq:optimProb}.}
\end{algorithmic}
\end{algorithm}
\begin{remarks} Instead of the gradient method used in Algorithm \ref{alg:adaptiveKernelGradient}, we can employ other optimization schemes. For example, the RFF method can be interpreted as an adaptive kernel method where the optimization is a stochastic sampling strategy.
\end{remarks}

\section{Experiments}
\label{sec:exp}

In this section, we provide some numerical experiments to illustrate the effectiveness of the adaptive kernel models introduced in section~\ref{sec:adaptker}. First, we consider the identification of a single-input single-output linear-time invariant (SISO LTI) dynamical system. This example is used to illustrate some important properties of the proposed approach. In particular, it shows that if a parametrization $\Lambda \mapsto \cH(\Lambda)$ can be obtained such that $F \in \cH(\Lambda^{\ast})$ for some $\Lambda^{\ast} \in \cL$, then the proposed adaptive methods can be used to completely recover $F$. Then, we compare the performance of adaptive kernel models to existing kernel approximations using benchmark classification and regression datasets. Our experiments are conducted on a workstation with an Intel(R) Xeon(R) W-2123 CPU @ 3.60GHz, 64 GB RAM, NVIDIA TITAN RTX GPU and Python 3.10.12 with Pytorch 2.4.0. All of the implemented models are trained and evaluated on the GPUs. We note that all of the results disclosed here are fully reproducible using our publicly available PyTorch implementation of the proposed methods (see the code availability statement at the end of the article).

\subsection{LTI system identification}
\label{subsec:lti}

We consider the problem of identifying a discrete-time SISO LTI system. Such systems can be used to model a variety of physical and engineering problems, we recommend~\cite{szidar, ljungsysid, vandenhof} for some examples. Let $\ell$ denote the vector space of (unilateral) complex sequences. The time domain representation of a discrete time SISO LTI system is given by the convolution equation
\begin{equation}
    \label{eq:timeDom}
    \by := \bh \ast \bu \quad (\by, \bh, \bu \in \ell),
\end{equation}
where $\bu$ and $\by$ are referred to as the input and output sequences, respectively, with $\bh$ denoting the impulse response. Here we implicitly assume the causality ($\bh_n = 0$ $(n < 0)$) and bounded-input bounded-output (BIBO) stability~\cite{szidar} of the system. In this case, the impulse response is absolutely summable, i.e., it satisfies $\bh \in \ell_1$. Furthermore, we assume that the impulse response can be written as
\begin{equation}
    \label{eq:impresp}
    h_n := \sum_{j=0}^{D-1} w_j \overline{\lambda_j}^n \quad (n \in \N)
\end{equation}
for some $D \in \N$ and $w_j \in \C$, $\lambda_j \in \D$ $(k=0,\ldots,D-1)$. 
The above assumptions are often made for systems in real applications. By Eq.~\eqref{eq:impresp}, gaining access to the parameters $\lambda_j$ and $w_j \ (j=0,\ldots,D-1)$, allows us to describe the system's behavior completely. Consider the $\cZ$-transform defined as
\begin{equation}
    \label{eq:Ztrans}
    \cZ[\bh](z) := \sum_{n=0}^{\infty} h_n z^n \quad (z \in \C, \ \bh \in \ell),
\end{equation}
whenever the infinite sum exists. Upholding the above mentioned assumptions about the system and assuming $\bu, \by \in \ell_2$, we can apply Eq.~\eqref{eq:Ztrans} to both sides of Eq.~\eqref{eq:timeDom} to obtain the frequency domain representation of the system, i.e.,
$$
    Y(z) = H(z)U(z) \quad (z \in \overline{\D}).
$$
Under the assumption in Eq.~\eqref{eq:impresp}, the transfer function $H(z) := \cZ[\bh](z)$ belongs to a finite-dimensional subspace of $H_2(\D)$ (see subsection~\ref{subsec:kernAprEx}). The transfer function describes the behavior of the system completely and can be written as
\begin{equation}
    \label{eq:transferFunc}
    H(z) = \sum_{j=0}^{D-1} \frac{w_j}{1 - \overline{\lambda_j}z}.
\end{equation}
By Eq.~\eqref{eq:transferFunc}, the transfer function of a SISO LTI system is a kernel model, where the basis functions consist of Cauchy kernel slices (see Eq.~\eqref{eq:CauchyKer}). The model is described the parameters $w_j$ and $\lambda_j$ $(j=0,\ldots,D-1)$. We note that the transfer function in Eq.~\eqref{eq:transferFunc} differs from the definition commonly found in systems theory literature~\cite{szidar, vandenhof}. Indeed, the poles of $H$ are given by $1/\overline{\lambda_j} \ (j=0,\ldots,D-1)$ and therefore the parameters $\lambda_j$ can be interpreted as the mirror image reflections of the poles across $\T$. If $H$ from Eq.~\eqref{eq:transferFunc} is real rational, then $\hH(z) := H(1/ {z}) \ (z \in \C \setminus \overline{\D})$ is what is usually found in the literature as the transfer function of such a system. The difference between this and our notion of the transfer function in Eq.~\eqref{eq:transferFunc} is due to the considered $\cZ$-transform (see Eq.~\eqref{eq:Ztrans}), which is defined with non-negative exponents (instead of non-positive ones as usual).

We are now ready to pose the problem of finding the parameters $w_j$ and $\lambda_j \ (j=0,\ldots,D-1)$ as an adaptive kernel approximation problem. Let $\cX := \D$, $\cY := \C$ and consider the dataset $\{ (z_k, y_k) \}_{k=1}^{q} \subset \T \times \C \ (q \in \N)$, where $H_{|\T}(z_k) = y_k$. Notably, this means that our dataset is not sampled over $\D$. Fortunately, the transfer function $H$ from Eq.~\eqref{eq:transferFunc} can be analytically continued to $\T$. Thus, identifying the parameters using data sampled on $\T$ is a feasible problem. This is relevant because in practical applications only $H_{|\T}$ can usually be estimated (or measured)~\cite{stenman2000adaptive}. The restriction $H_{|\T}$ is referred to as the frequency response of the system. Define the error functional $E$ as
\begin{multline}
    \label{eq:lsqErr}
    E\left(\left(z_1, f^{\Lambda}(z_1), y_1\right), \ldots, \left(z_q, f^{\Lambda}(z_q), y_q\right)\right) \\ =
    \sum_{k=1}^{q} \left| f^{\Lambda}(z_k) - y_k  \right|^2,
\end{multline}
where the adaptive kernel model $f^{\Lambda}$ is defined according to Eq.~\eqref{eq:adaptiveKernelModels}. We consider two different adaptive kernel models:
\begin{enumerate}[label = \alph*)]
    \item In the first case, we minimize the error functional in Eq.~\eqref{eq:lsqErr} using a Takenaka-Malmquist kernel model, i.e.,
    \begin{equation}
        \label{eq:MTmodel}
        f^{\ba} := \sum_{j=0}^{D-1} b_j \varphi_j^{\ba}, 
    \end{equation}
    where the basis functions $\varphi_j^{\ba} \ (j=0,\ldots,D-1)$ are defined according to Eq.~\eqref{eq:mtsys}. In other words, we consider $\cL := \D^D$ and the parametrization $\Lambda \mapsto \myspan\{ \varphi_k^{\Lambda} : j=0,\ldots,D-1 \} =: \cH(\Lambda) \ (\Lambda \in \cL)$. That is, the RKHSs $\cH(\Lambda)$ coincide with the $D$-dimensional subspaces of $H_2(\D)$. Note that if  
    \begin{equation}
        \label{eq:mtpars}
        \Lambda = \ba := (\lambda_1, \ldots, \lambda_{D-1}) \in \D^D,
    \end{equation} then
    $$
        H \in \myspan\{ \varphi_0^{\ba}, \ldots, \varphi_{D-1}^{\ba} \}
    $$
    and the transfer function from Eq.~\eqref{eq:transferFunc} can be reconstructed perfectly. In addition, choosing this adaptive kernel model can be used to identify the parameters $\lambda_j$ $(j=0,\ldots,D-1)$ in Eq.~\eqref{eq:transferFunc}. Indeed, perfect reconstruction is possible if Eq.~\eqref{eq:mtpars} holds. In this way, optimized model parameters can be used to identify the mirror image poles (and thus the poles) of the transfer function. In this scenario, the model parameters $b_j$ and $a_j$ from Eq.~\eqref{eq:MTmodel} are randomly initialized, and are determined during training with ADAM~\cite{kingma2014adam} by minimizing Eq.~\eqref{eq:lsqErr}. We note that the Wirtinger derivatives of the error function $E$ with respect to the model parameters exist and can be easily computed.

    \item In the second case, we use an RFF model as defined in Eq.~\eqref{eq:RFFmodel}. That is, we consider an adaptive kernel model defined by the parameter space $\cL := \R^Q$ and the parametrization 
    $\Lambda := \left[\omega_0,\ldots,\omega_{Q-1}\right]^\top \mapsto \myspan\{\varphi^\Lambda_j : j=0,\ldots,Q-1 \} =: \cH(\Lambda) \ (\Lambda \in \cL)$, where 
    \begin{equation*}\varphi^\Lambda_j:[-\pi,\pi) \to \C , \quad \varphi^\Lambda_j(t) = \eu^{\iu\omega_j t}  \quad (j=0,\ldots,Q-1).
    \end{equation*}
    In this case, the subspace $\cH(\Lambda)$ will not fully contain the transfer function $H$ for any choice of $Q \in \N$. We demonstrate that even for large values of $Q$, the reconstruction error (in terms of Eq.~\eqref{eq:lsqErr}) may remain large. In addition, the learned parameters $\omega_j \ (j=0,\ldots,Q-1)$  will not contain any information about the system parameters $\lambda_j \ (j=0,\ldots,D-1)$. The parameters $\omega_j$ are initialized using the stochastic approach proposed in the RFF method (see subsection~\ref{subsec:prevMeth}), and then refined using ADAM. In this way, this second approach also uses an adaptive kernel model, albeit with a sub-optimal choice of basis functions. 
\end{enumerate}

For this experiment, we consider an LTI system described by $D=4$ parameters. The true nonlinear parameters are given by
\begin{equation}
    \label{eq:trueMTpars}
    \Lambda^{\ast} := \{ 0.8, \ 0.4 + 0.3\iu, \ 0.4 - 0.3 \iu, -0.5 \} \subset \mathbb{D}
\end{equation}
and the transfer function is defined as
\begin{multline}
    \label{eq:trfex}
        H(z) := \frac{1}{1 - 0.8  z} + \frac{1+\iu}{1 - (0.4 + 0.3 \iu) z} \\ + \frac{1-\iu}{1 - (0.4 - 0.3 \iu)  z} + \frac{1}{1 + 0.5 z}.
\end{multline}

In our experiment, the TM function-based adaptive kernel model from Eq.~\eqref{eq:MTmodel} uses $D=4$ basis functions. This way, if the  parameters from Eq.~\eqref{eq:trueMTpars} are learned correctly, that is, Eq.~\eqref{eq:mtpars} holds, where $\lambda_j \ (j=0,\ldots,3)$ match exactly the values in Eq.~\eqref{eq:trueMTpars}, then the transfer function can be reconstructed perfectly. This is only possible because the parameterized model searches through subspaces spanned by TM functions, one of which contains the exact solution to the learning problem. On the other hand, using the trigonometric model from~\eqref{eq:RFFmodel}, we need much larger subspaces to get comparable approximations to $H$. In our experiments, the trigonometric models use $Q\in\{200,400\}$ parameters. In Fig.~\ref{fig:LTI}, the reconstructed frequency responses are illustrated. For this experiment, we consider a dataset consisting of $q=5000$ samples. The results clearly show that not only the proposed TM function-based adaptive kernel model is able to provide a better approximation using much fewer parameters than the RFF-like approach, but the learned parameters also coincide with the system's true parameters $\lambda_j$. This result is illustrated in Fig.~\ref{fig:poles}. This experiment verifies that choosing an adaptive kernel model based on a priori information about the problem can yield much smaller, better quality, and more meaningful models.

\begin{figure}
    \centering
    \includegraphics[width=0.98\linewidth]{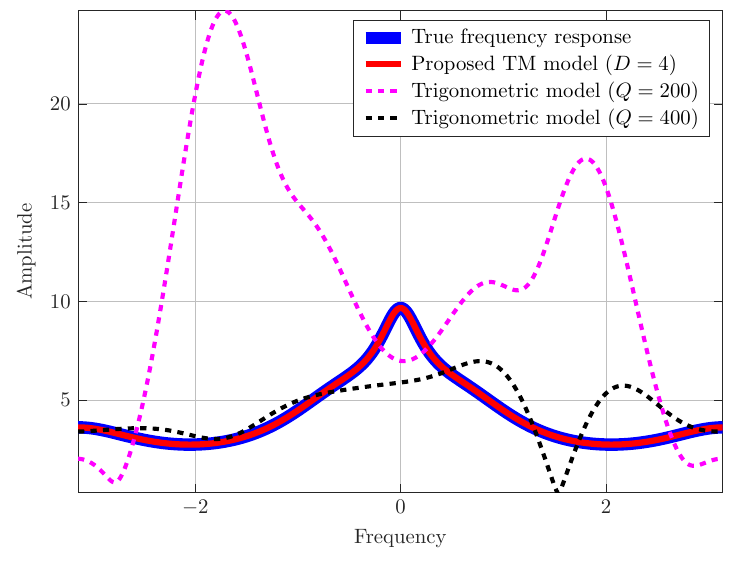}
    \includegraphics[width=0.98\linewidth]{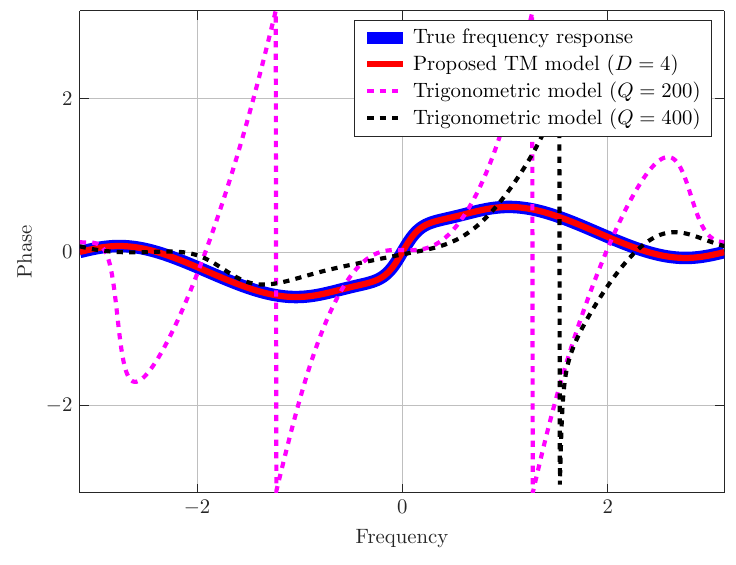}
    \caption{True and reconstructed frequency response $H_{|\T}$ using trigonometric and TM based adaptive kernel models. TOP: absolute value of frequency response. BOTTOM: phase of frequency response.}
    \label{fig:LTI}
\end{figure}

\begin{figure}
    \centering
    \includegraphics[width=0.98\linewidth]{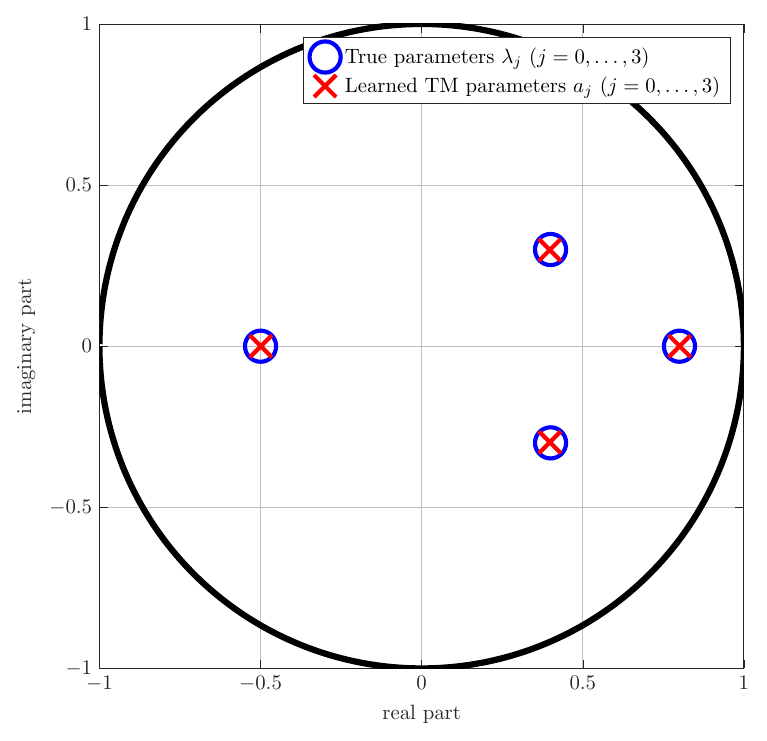}
    \caption{True nonlinear parameters (mirror image poles) of the transfer function $H$ and the learned parameters of the TM based adaptive kernel model.}
    \label{fig:poles}
\end{figure}

\subsection{Forest cover type classification}
\label{subsec:benchmark}

In our second set of experiments, we demonstrate that the proposed adaptive kernel models can provide state-of-the-art performance on benchmark problems. We select the  ForestCover dataset~\cite{CVM} to demonstrate this. This dataset contains measurements about trees from four areas of the Roosevelt National Forest in Colorado. The data consists of $54$-dimensional vectors, representing measurements about forest covers at certain locations. It contains a total of $581012$ data samples. In this case, the map-to-be-learned $F : \R^{54} \supset \cX \to \cY := \{0,1,\ldots,6 \}$ can be interpreted as a classifier that maps the data samples to different forest cover types. This problem is highly nonlinear, and no a priori assumptions can be made on the structure of $F$ to help us select an appropriate family of parameterized maps (like in the system identification example of subsection~\ref{subsec:lti}). However, previous studies~\cite{rahimi, CVM} indicate that  radial basis function-based kernels and their approximations (see, e.g.,~\cite{rahimi}) can been used to solve this classification problem with a high precision. For this reason, we first consider a trigonometric adaptive model. That is, we consider a model $f^{\Lambda} : \R^{54} \to \R^{7}$, where
\begin{equation}
\label{eq:classMod}
f^{\Lambda}_{\mu} = \sum_{j=0}^{D-1} w_{j,\mu} \eu^{\iu \langle \lambda_j, \cdot \rangle_{\R^N}} \quad (\mu=0,\ldots,6)
\end{equation}
for different choices of $D \in \N$. The frequency parameters $\lambda_j \in \R^{54} \ (j=0,\ldots,D-1)$ are optimized in two different ways:
\begin{enumerate}[label = \alph*)]
    \item Using the RFF method (see subsection~\ref{subsec:prevMeth}): The parameters $\lambda_j \ (j=0,\ldots,D-1)$ are picked randomly according to an appropriate probability distribution and are then fixed. Training in this case consists of finding optimal parameters $W=[w_{j,\mu}]_{j=0,\mu=0}^{D-1,6}\in \R^{D\times 7}$. 
    \item Proposed method: The parameters $\lambda_j \ (j=0,\ldots,D-1)$ are initialized in a uniformly random manner over the interval $[-10, 10]^{54} \subset \R^{54}$ and are then improved iteratively using ADAM. In this case both $\lambda_j$ and $w_j\in\R^7$ are considered free parameters.
\end{enumerate}
In both cases, the  parameters $w_{j,\mu} \ (j=0,\ldots,D-1, \ \mu=0,\ldots,6)$ are optimized using ADAM  with a batch size of $8000$. Thus, both the considered RFF and proposed trigonometric models can be interpreted as adaptive kernel methods relying on the parameter space $\cL := \R^{54 \times D}$ and the parametrization $\Lambda \mapsto \myspan\{ \eu^{\iu \langle \lambda_j, \cdot \rangle_{\R^N}} : j=0,\ldots,D-1 \} =: \cH(\Lambda) \ (\Lambda = [\lambda_0, \ldots, \lambda_{D-1}]^\top \in \cL)$. The difference between the two models is that for the RFF case, we pick a $\Lambda \in \cL$ according to the stochastic method proposed in~\cite{rahimi} and do not modify this choice throughout the training. In contrast, when considering the proposed trigonometric adaptive model, the parameters $\Lambda \in \cL$ defining the space $\cH(\Lambda)$ are trained together with the parameters  $w_{j,\mu} \ (j=0,\ldots,D-1, \ \mu=0,\ldots,6)$ using gradient-based optimization techniques.

In addition to the trigonometric model in Eq.~\eqref{eq:classMod}, we also consider another adaptive kernel model using non-trigonometric basis functions. 
The considered kernel model can be written as
\begin{equation}
\label{eq:classModRat}
g^{\Lambda}_{\mu}(x) = \sum_{j=0}^{D-1} w_{j,\mu} \arctan ( \langle x, \lambda_j \rangle_{\R^{N}}),
\end{equation}
where $\lambda_j \in \R^N$ with $N = 54$ and $w_{j,\mu} \in \R$ $(j=0,\ldots,D-1, \ \mu=0,\ldots,6)$. Thus an adaptive kernel model is obtained defined by the parameter space $\cL := \R^{N \times D}$ and the parametrization $\Lambda \mapsto \myspan\{  \arctan ( \langle x, \lambda_j \rangle_{\R^{N}} : j=0,\ldots,D-1 \} =: \cH(\Lambda) \ (\Lambda \in \cL)$. Similarly to the trigonometric model, the parameters of this arc tangent (AT) basis adaptive kernel model are optimized using ADAM as specified below.
 
For this experiment, the error functional is chosen as the Crammer-Singer multi-class hinge loss function~\cite{crammer2001algorithmic} defined as
\begin{multline}
    \label{eq:mhinge}
    E\left(\left(t_1, f^{\Lambda}(t_1), y_1\right), \ldots, \left(t_q, f^{\Lambda}(t_q), y_q\right)\right) \\ = 
    \frac{1}{q} \sum_{k=1}^{q} \max_{\mu \in \left\{0,\ldots,6\right\} \setminus \{y_n\}}  \max\left\{0,  1 +  f^{\Lambda}_\mu(t_k) - f^{\Lambda}_{y_k}(t_k)\right\} \\ + 
    \alpha  {\| C \|}_{\rm F} 
\end{multline}
for $\alpha = 0.1$, where ${\|\cdot\|}_{\rm F}$ denotes the Frobenius norm.
Each considered model (RFF, adaptive trigonometric, adaptive AT) is trained using a batch size of $8000$ with an epoch budget of $500$. The ForestCover dataset is split into disjoint training and test sets comprising $80 \%$ and $20 \%$ percent of the dataset, respectively. In this way, the training set contains a total of $q = 464809$ data samples, while the generalization ability of the model is measured using $116203$ samples in the test set. Model efficiency is measured in terms of classification accuracy. The performance of the proposed (adaptive trigonometric and adaptive AT) models and the implemented RFF scheme are illustrated in Table~\ref{tab:forestCov}. The last two columns denote the training time and number of epochs required to reach the maximum accuracy on the test set within the epoch budget. It also contains reported achieved accuracies on the same dataset by other models in the literature. 

\begin{table}[bt]
    \centering
    \caption{Classification performance of the proposed adaptive kernel models (trigonometric and arc tangent), RFF, and approaches from the literature.}
    \begin{tabular}{ccccc} \toprule
        \textbf{method} & \textbf{$D$} & \makecell{\textbf{accuracy} \\ \textbf{($\%$)}} & \makecell{\textbf{train. time} \\ \textbf{(min)}} & \makecell{\textbf{reference}\\ \textbf{epoch}} \\ \midrule
        \multirow{4}{*}{ \makecell{adapt. trig. \\ (proposed)}} &  $500$ & $81.6$ & $70.0$ & $495$ \\ 
        & $1000$ & $86.9$ & $86.48$ & $499$\\ 
        & $5000$ & $92.2$ & $32.27$ & $181$ \\ 
        & $10000$ & $92.7$ & $30.00$ & $157$ \\ \midrule
        \multirow{4}{*}{\makecell{adapt. AT \\ (proposed)}} & $500$ & $79.4$ & $85.9$ & $495$\\ 
        & $1000$ & $82.0$ & $83.8$ & $494$ \\ 
        & $5000$ & $86.5$ & $85.1$ & $493$\\ 
        & $10000$ & $87.3$ & $85.0$ & $479$ \\ \midrule
        \multirow{4}{*}{\makecell{RFF \\ (implemented)}} & $500$ & $77.2$ & $82.3$ & $484$\\ 
        & $1000$ & $78.8$ & $82.5$ & $484$\\ 
        & $5000$ & $81.0$ & $82.8$ & $455$\\ 
        & $10000$ & $81.3$ & $94.4$ & $493$ \\ \midrule 
        RFF~\cite{rahimi} & $5000$ & $88.4$ & $71$ & -\\ 
        exact SVM~\cite{rahimi} & $464809$ & $97.8$ & $2640$ & - \\ \bottomrule
    \end{tabular}
    \label{tab:forestCov}
\end{table}

Our results clearly illustrate the benefit of using adaptive kernel models. The proposed adaptive models (especially the one using trigonometric basis functions) clearly outperform our implementation of the RFF method. In fact, using only $D=500$ basis functions, it  results in $\approx 4 \%$ improved accuracy, when compared to an RFF model of same dimension. The training times show that despite the significant improvement in model performance and gradient-based minimization of Eq.~\eqref{eq:mhinge}, the training costs do not explode compared to the RFF method. In fact, in the case of the adaptive trigonometric model, much fewer epochs are needed to find a good approximation of $F$ and the training time is significantly reduced compared to the RFF method. 
The AT model also outperforms the RFF approach, but the results show that models satisfying Eq.~\eqref{eq:classMod} are a more appropriate choice for this particular example.

The last two lines of Table~\ref{tab:forestCov} illustrate the performance of similar kernel models as reported in the literature. We note that these methods are not directly implemented by us. In the case of the RFF method, the reported accuracy is significantly higher than the one achieved by our own implementation. Presumably, this is due to the differences in the training setup (different training and test sets, different optimization scheme, etc.). Despite these differences, the proposed adaptive trigonometric model is able to outperform the RFF method in terms of accuracy for the same amount of parameters ($D=5000$).

For this experiment, the proposed adaptive trigonometric model (Eq.~\eqref{eq:classMod}) provides the best performance, closely followed by the non-trigonometric AT model (Eq.~\eqref{eq:classModRat}). These significantly outperform the RFF (and the remaining) methods. It can be seen that by choosing $D=1000$, the proposed models achieve $>80 \%$ classification accuracy. Such a performance is only achieved for $D=5000$ when using our own RFF implementation as well as the original implementation described in~\cite{rahimi}. Increasing the dimension of the solution space significantly improves the adaptive models, with the trigonometric model achieving over $90 \%$ accuracy on the test set with $D=5000$. This experiment shows that learning the correct solution space can significantly reduce the number of required model parameters, and therefore the inference time of the classifier, while improving model performance. 
Additionally, when the parametrization $\Lambda\mapsto\cH (\Lambda)$ is chosen well, then the use of adaptive models can significantly reduce training time. 
This, together with the findings in subsection~\ref{subsec:lti}, demonstrates that the proposed adaptive kernel models can play a crucial role in the development of interpretable, low-complexity, and high-performing kernel methods.

\section{Conclusions}
\label{sec:conc}

In this paper, we considered kernel models using parameterized bases. We demonstrated that choosing the right bases can improve expansion-based approximations to kernels of infinite-dimensional RKHSs. We also verified that adaptive kernel models can significantly improve the performance of previous kernel methods despite containing fewer (and sometimes physically meaningful) parameters. We specified sufficient conditions for the existence of optimal adaptive kernel models and studied their properties.

In the next phase of our research, we will develop applications using the modeling procedure proposed in this paper. One possible application could be to use the proposed adaptive kernel models to approximate the nonlinear dynamics of control systems. The produced approximation could then be used by the controller to synthesize an optimal input for the system.

\section*{Code and data availability}
The Python implementation of the proposed methods and experiments can be downloaded from 
\begin{center}
\url{https://gitlab.com/tamasdzs/adaptive-kernel-methods}.
\end{center}

\section*{Acknowledgment}
The research of TD has received funding from the Swiss
Government Excellence Scholarship No. 2025.0057.  The
research of AA was funded in part by the Swiss National
Science Foundation (SNSF) grant No. 224943. The research of ZS and JB was supported by the European Union within the framework of the National Laboratory for Autonomous Systems  (RRF-2.3.1-21-2022-00002).

\section*{CRediT author statement}
\textbf{Tam\'as D\'ozsa:} Conceptualization, Methodology, Software, Formal analysis, Investigation, Writing -- original draft;
\textbf{Andrea Angino:} Methodology, Software, Writing -- Original Draft, Visualization; 
\textbf{Zolt\'an Szab\'o:} Validation, Writing -- Review \& Editing;
\textbf{J\'ozsef Bokor:} Conceptualization, Funding acquisition; 
\textbf{Matthias Voigt:} Writing -- Review \& Editing, Supervision, Project administration, Funding acquisition.
\ifCLASSOPTIONcaptionsoff
  \newpage
\fi

\bibliographystyle{ieeetr}
\bibliography{refs}

\end{document}